\providecommand{\U}[1]{\protect \rule{.1in}{.1in}}
\newtheorem{theorem}{Theorem}[section]
\newtheorem{proposition}[theorem]{Proposition}
\newtheorem{corollary}[theorem]{Corollary}
\theoremstyle{definition}
\newtheorem{definition}[theorem]{Definition}
\theoremstyle{remark}
\newtheorem{remark}[theorem]{Remark}
\numberwithin{equation}{section}
\begin{document}
\title[Slant helices in three dimensional Lie groups]{Slant helices in three dimensional Lie groups}
\author[O. Zeki Okuyucu, \.{I}.G\"{o}k, Y. Yay\i, N. Ekmekci]{O. Zeki Okuyucu$^{1}$$^{*}$, \.{I}.G\"{o}k$^{2}$, Y. Yayl\i$^{2}$\ and N.
Ekmekci$^{2}$}
\address{$^{1}$ Bilecik \c{S}eyh Edeabali University, Faculty of Sciences and Arts, Department of
Mathematics, 11210, Bilecik, Turkey.}
\email{osman.okuyucu@bilecik.edu.tr}
\address{$^{2}$ Ankara University, Faculty of Science, Department of Mathematics,
06100, Tando\~{g}an, Ankara, Turkey.}
\email{igok@science.ankara.edu.tr}
\email{yayli@science.ankara.edu.tr}
\email{nekmekci@science.ankara.edu.tr}
\subjclass[2010]{Primary 53A04; Secondary 22E15.}
\keywords{Slant helices, curves in a Lie groups.}
\keywords{Slant helices, curves in a Lie groups.}
\date{Received: xxxxxx; Revised: yyyyyy; Accepted: zzzzzz. }
\date{\indent $^{*}$ Corresponding author}
\date{Received: xxxxxx; Revised: yyyyyy; Accepted: zzzzzz. }
\date{\indent $^{*}$ Corresponding author}

\begin{abstract}
In this paper, we define slant helices in three dimensional Lie Groups with a
bi-invariant metric and obtain a characterization of slant helices. Moreover,
we give some relations between slant helices and their involutes, spherical images.

\end{abstract}
\keywords{Slant helices, curves in a Lie groups.}
\keywords{Slant helices, curves in a Lie groups.}
\maketitle

\setcounter{page}{1}

\setcounter{page}{1}


\setcounter{page}{1}


\section{Introduction}

In differential geometry, we think that curves are geometric set of points of
loci. Curves theory is important workframe in the differential geometry
studies and we have a lot of special curves such as geodesics, circles,
Bertrand curves, circular helices, general helices, slant helices etc.
Characterizations of these special curves are heavily studied for a long time
and are still studied. We can see helical structures in nature and mechanic
tools. In the field of computer aided design and computer graphics, helices
can be used for the tool path description, the simulation of kinematic motion
or design of highways. Also we can see the helix curve or helical structure in
fractal geometry, for instance hyperhelices.\ In differential geometry; a
curve of constant slope or general helix in Euclidean 3-space $\mathbb{E}^{3}%
$, is defined by the property that its tangent vector field makes a constant
angle with a fixed straight line (the axis of the general helix). A classical
result stated by \textit{M. A. Lancret} in 1802 and first proved
by\textit{\ B. de Saint Venant} in 1845 (see \cite{lancret,struik} for
details) is: A necessary and sufficient condition that a curve be a general
helix is that the ratio of curvature to torsion is constant. If both of
$\varkappa$ and $\tau$ are non-zero constants then the curve is called as a
circular helix. It is known that a straight line and a circle are
degenerate-helix examples ($\varkappa=0$, if the curve is straight line and
$\tau=0$, if the curve is a circle).


The Lancret theorem was revisited and solved by Barros \cite{barros} in
$3$-dimensional real space forms by using killing vector fields along curves.
Also in the same spaceforms, a characterization of helices and Cornu spirals
is given by Arroyo, Barros and Garay in \cite{garay}.


The degenarete semi-Riemannian geometry of Lie group is studied by
\c{C}\"{o}ken and \c{C}ift\c{c}i \cite{coken}. Moreover, they obtanied a
naturally reductive homogeneous semi-Riemannian space using the Lie group.
Then \c{C}ift\c{c}i \cite{ciftci} defined general helices in three dimensional
Lie groups with a bi-invariant metric and obtained a generalization of
Lancret's theorem and gave a relation between the geodesics of the so-called
cylinders and general helices.


Recently, \textit{Izumiya and Takeuchi}, in \cite{izu}, have introduced the
concept of slant helix in Euclidean $3$-space.\ A slant helix in Euclidean
space $\mathbb{E}^{3}$ was defined by the property that its principal normal
vector field makes a constant angle with a fixed direction. Moreover, Izumiya
and Takeuchi showed that $\alpha$ is a slant helix if and only if the geodesic
curvature of spherical image of principal normal indicatrix $\left(  N\right)
$ of a space curve $\alpha$
\[
\sigma_{N}\left(  s\right)  =\left(  \frac{\varkappa^{2}}{\left(
\varkappa^{2}+\tau^{2}\right)  ^{3/2}}\left(  \frac{\tau}{\varkappa}\right)
^{\prime}\right)  \left(  s\right)
\]
is a constant function. In \cite{kula}; Kula and Yayli have studied spherical
images of a slant helix and showed that the spherical images of a slant helix
are spherical helices. In \cite{kula1}, the authors characterize slant helices
by certain differential equations verified for each one of spherical
indicatrix in Euclidean $3$-space. Ali and Lopez, in \cite{ali}, have studied
slant helix in Minkowski $3$-space. They showed that the spherical indicatrix
of a slant helix are helices in $\mathbb{E}_{1}^{3}$. Then Ali and Turgut
studied position vector of a time-like slant helix with respect to standard
frame of Minkowski space $\mathbb{E}_{1}^{3}$ in terms of Frenet equations
(see \cite{ahmad} for details). Also slant helices are used in some
applications in quaternion algebra (see \cite{gok,gok1} for details).


In this paper, first of all, we define slant helices in a three dimensional
Lie group $G$ with a bi-invariant metric as a curve $\alpha:I\subset
\mathbb{R\rightarrow}G$ whose normal vector field makes a constant angle with
a left invariant vector field (Definition \ref{3.1}). And then the main result
to this paper is given as (Theorem \ref{3.6}): A curve $\alpha:I\subset
\mathbb{R\rightarrow}G$ with the Frenet apparatus $\left \{  T,N,B,\varkappa
,\tau \right \}  $ is a slant helix if and only if%
\[
\frac{\varkappa(H^{2}+1)^{\frac{3}{2}}}{H^{\shortmid}}%
\]
is a constant function where $H$ is a harmonic curvature function of the curve
$\alpha$ (Definition \ref{3.2}).


Then we define the involutes and spherical image of a curve in three
dimensional Lie group $G$. Also we show that the spherical image of a slant
helix and the involutes of a slant helix are general helices. Finally, we give
characterization of a slant helix if $G$ are Abellian, $SO^{3}$ and $S^{3}$.


Note that three dimensional Lie groups admitting bi-invariant metrics are
$SO\left(  3\right)  ,SU^{2}$ and Abellian Lie groups. So we believe that
characterizations of slant curves in this study will be useful for curves
theory in Lie groups.

\section{Preliminaries}

Let $G$ be a Lie group with a bi-invariant metric $\left \langle \text{
},\right \rangle $ and $D$ be the Levi-Civita connection of Lie group $G.$ If
$\mathfrak{g}$ denotes the Lie algebra of $G$ then we know that $\mathfrak{g}
$ is issomorphic to $T_{e}G$ where $e$ is neutral element of $G.$ If
$\left \langle \text{ },\right \rangle $ is a bi-invariant metric on $G$ then we
have%
\begin{equation}
\left \langle X,\left[  Y,Z\right]  \right \rangle =\left \langle \left[
X,Y\right]  ,Z\right \rangle \label{2-1}%
\end{equation}
and
\begin{equation}
D_{X}Y=\frac{1}{2}\left[  X,Y\right] \label{2-2}%
\end{equation}
for all $X,Y$ and $Z\in \mathfrak{g}.$


Let $\alpha:I\subset \mathbb{R\rightarrow}G$ be an arc-lenghted curve and
$\left \{  X_{1},X_{2,}...,X_{n}\right \}  $ be an orthonormal basis of
$\mathfrak{g}.$ In this case, we write that any two vector fields $W$ and $Z$
along the curve $\alpha \ $as $W=\sum_{i=1}^{n}w_{i}X_{i}$ and $Z=\sum
_{i=1}^{n}z_{i}X_{i}$ where $w_{i}:I\rightarrow \mathbb{R}$ and $z_{i}%
:I\rightarrow \mathbb{R}$ are smooth functions. Also the Lie bracket of two
vector fields $W$ and $Z$ is given
\[
\left[  W,Z\right]  =\sum_{i=1}^{n}w_{i}z_{i}\left[  X_{i},X_{j}\right]
\]
and the covariant derivative of $W$ along the curve $\alpha$ with the notation
$D_{\alpha^{\shortmid}}W$ is given as follows%
\begin{equation}
D_{\alpha^{\shortmid}}W=\overset{\cdot}{W}+\frac{1}{2}\left[  T,W\right]
\label{2-3}%
\end{equation}
where $T=\alpha^{\prime}$ and $\overset{\cdot}{W}=\sum_{i=1}^{n}\overset
{\cdot}{w_{i}}X_{i}$ or $\overset{\cdot}{W}=\sum_{i=1}^{n}\frac{dw}{dt}X_{i}.$
Note that if $W$ is the left-invariant vector field to the curve $\alpha$ then
$\overset{\cdot}{W}=0$ (see \cite{crouch} for details).


Let $G$ be a three dimensional Lie group and $\left(  T,N,B,\varkappa
,\tau \right)  $ denote the Frenet apparatus of the curve $\alpha$, and
calculate $\varkappa=\overset{\cdot}{\left \Vert T\right \Vert }.$


\begin{definition}
\label{2.1}Let $\alpha:I\subset \mathbb{R\rightarrow}G$ be a parametrized
curve. Then $\alpha$ is called a general helix if it makes a constant angle
with a left-invariant vector field $X$. That is,%
\[
\left \langle T(s),X\right \rangle =\cos \theta \text{ for all }s\in I,
\]
for the left-invariant vector field $X\in g$ is unit length and $\theta$ is a
constant angle between $X$ and $T$ which is the tangent vector field of the
curve $\alpha$ (see \cite{ciftci}).
\end{definition}


\begin{definition}
\label{2.2}Let $\alpha:I\subset \mathbb{R\rightarrow}G$ be a parametrized curve
with the Frenet apparatus $\left(  T,N,B,\varkappa,\tau \right)  $ then
\begin{equation}
\tau_{G}=\frac{1}{2}\left \langle \left[  T,N\right]  ,B\right \rangle
\label{2-4}%
\end{equation}
or
\[
\tau_{G}=\frac{1}{2\varkappa^{2}\tau}\overset{\cdot \cdot \text{
\  \  \  \  \  \  \  \ }\cdot}{\left \langle T,\left[  T,T\right]  \right \rangle
}+\frac{1}{4\varkappa^{2}\tau}\overset{\text{ \  \ }\cdot}{\left \Vert \left[
T,T\right]  \right \Vert ^{2}}%
\]
(see \cite{ciftci}).
\end{definition}


\begin{theorem}
\label{2.3}Let $\alpha:I\subset \mathbb{R\rightarrow}G$ be a parametrized curve
with the Frenet apparatus $\left(  T,N,B,\varkappa,\tau \right)  $. If the
curve $\alpha$ is a general helix, if and only if,%
\[
\tau=c\varkappa+\tau_{G}%
\]
where c is a constant (see \cite{ciftci}).
\end{theorem}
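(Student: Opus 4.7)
The plan is to differentiate the defining equation $\langle T(s), X\rangle = \cos\theta$ twice along $\alpha$ and exploit three tools: the covariant derivative formula (\ref{2-3}), the Frenet equations, and the bi-invariance identity (\ref{2-1}). Because $X$ is left-invariant one has $\dot X = 0$, so (\ref{2-3}) simplifies to $D_{\alpha'}X = \tfrac{1}{2}[T, X]$ whenever a derivative falls on $X$.

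For the ``only if'' direction, a first differentiation of $\langle T, X\rangle = \cos\theta$ produces
\[
\varkappa\langle N, X\rangle + \tfrac{1}{2}\langle T, [T, X]\rangle = 0,
\]
and the latter term vanishes by (\ref{2-1}) since $\langle T, [T, X]\rangle = \langle [T, T], X\rangle = 0$. Assuming $\varkappa \neq 0$, $X$ lies in $\mathrm{span}\{T, B\}$, so I can write $X = \cos\theta\, T + \sin\theta\, B$. A second differentiation of $\langle N, X\rangle = 0$ together with the Frenet equations yields
\[
-\varkappa\cos\theta + \tau\sin\theta + \tfrac{1}{2}\sin\theta\,\langle N, [T, B]\rangle = 0.
\]
Using (\ref{2-1}) twice and (\ref{2-4}) one finds $\langle N, [T, B]\rangle = \langle [N, T], B\rangle = -\langle [T, N], B\rangle = -2\tau_G$, and substituting gives $(\tau - \tau_G)\sin\theta = \varkappa\cos\theta$, i.e.\ $\tau = c\varkappa + \tau_G$ with $c = \cot\theta$.

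For the converse I would reverse-engineer the axis: set $\cot\theta := c$, define $X := \cos\theta\, T + \sin\theta\, B$ along $\alpha$, and verify $\dot X = 0$. This forces $X$ to have constant components in the left-invariant orthonormal frame $\{X_1, X_2, X_3\}$, so $X$ is the restriction to $\alpha$ of a unit left-invariant vector field on $G$. Using the identity $[T, B] = -2\tau_G N$ (the vector form of the bracket computation above; orthogonality of $[T, B]$ to $T$ and $B$ follows from (\ref{2-1})), the Frenet equations, and (\ref{2-3}),
\[
\dot X = D_{\alpha'}X - \tfrac{1}{2}[T, X] = \bigl(\varkappa\cos\theta - \tau\sin\theta + \tau_G\sin\theta\bigr)\, N,
\]
which reduces to $\varkappa(\cos\theta - c\sin\theta)\, N = 0$ upon substituting $\tau = c\varkappa + \tau_G$ and $\cot\theta = c$. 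Since $\langle T, X\rangle = \cos\theta$ is then constant, $\alpha$ is a general helix in the sense of Definition \ref{2.1}.

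The main technical step is the identification $\langle N, [T, B]\rangle = -2\tau_G$, which is the only place the three-dimensionality of $G$ is really used. Its validity rests on the total antisymmetry of $(X, Y, Z) \mapsto \langle X, [Y, Z]\rangle$ forced by (\ref{2-1}); once this identity is in hand the remainder of the proof is routine bookkeeping with the Frenet equations.
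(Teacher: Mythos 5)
Your proof is correct. Note, though, that the paper itself gives no proof of Theorem \ref{2.3}: it is quoted from \cite{ciftci}, so there is no internal argument to compare against. Your route is the natural one and is essentially the same differentiation scheme the paper later uses for slant helices (Proposition \ref{3.5} and Theorem \ref{3.6}): differentiate the constant-angle condition, kill the bracket terms with the invariance identity \eqref{2-1}, and identify $\langle N,[T,B]\rangle=-2\tau_G$ (this is the content of Proposition \ref{3.4}, whose statement carries a typo, $-2\tau_G B$ instead of $-2\tau_G N$, but whose proof yields exactly your identity). Two small points you should make explicit: in the forward direction, after obtaining $\langle N,X\rangle=0$ you need $\langle B,X\rangle$ to be constant (it equals $\pm\sin\theta$ by unit length of $X$ and continuity) before writing $X=\cos\theta\,T+\sin\theta\,B$; and the final division by $\sin\theta$ is harmless because $\sin\theta=0$ would force $\varkappa=0$, contradicting the existence of the Frenet apparatus. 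Your converse, constructing $X=\cos\theta\,T+\sin\theta\,B$ and verifying $\dot X=0$ so that $X$ has constant components in the left-invariant frame and hence extends to a unit left-invariant field, is a genuine argument; this is more than the paper supplies even for its analogous Theorem \ref{3.6}, where the converse is dismissed as obvious.
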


\section{Slant helices in a three dimensional Lie group}

In this section we define slant helix and its axis in a three dimensional Lie
group $G$ with a bi-invariant metric $\left \langle \text{ },\right \rangle $.
Also we give a characterization and some characterizations of the slant
helices in the special cases of $G$.


\begin{definition}
\label{3.1}Let $\alpha:I\subset \mathbb{R\rightarrow}G$ be an arc length
parametrized curve. Then $\alpha$ is called a slant helix if its principal
normal vector makes a constant angle with a left-invariant vector field $X$
which is unit length. That is,%
\[
\left \langle N(s),X\right \rangle =\cos \theta \text{ for all }s\in I,
\]
where $\theta \neq \frac{\pi}{2}$ is a constant angle between $X$ and $N$ which
is the principal normal vector field of the curve $\alpha$.
\end{definition}


\begin{definition}
\label{3.2}Let $\alpha:I\subset \mathbb{R\rightarrow}G$ be an arc length
parametrized curve with the Frenet apparatus $\left \{  T,N,B,\varkappa
,\tau \right \}  .$ Then the harmonic curvature function of the curve $\alpha$
is defined by%
\[
H=\dfrac{\tau-\tau_{G}}{\varkappa}%
\]
where $\tau_{G}=\frac{1}{2}\left \langle \left[  T,N\right]  ,B\right \rangle .
$
\end{definition}


\begin{definition}
\label{3.3}Let $\alpha:I\subset \mathbb{R\rightarrow}G$ be an arc length
parametrized curve with the Frenet apparatus $\left \{  T,N,B,\varkappa
,\tau \right \}  $. Then the geodesic curvature of the spherical image of the
principal normal indicatrix $\left(  N\right)  $ of the curve $\alpha$ is
defined by a constant $\sigma_{N}$ given by%
\[
\sigma_{N}=\frac{\varkappa(1+H^{2})^{\frac{3}{2}}}{H^{\shortmid}}%
\]
where $H$ is harmonic curvature function of the curve $\alpha.$
\end{definition}


\begin{proposition}
\label{3.4}Let $\alpha:I\subset \mathbb{R\rightarrow}G$ be an arc length
parametrized curve with the Frenet apparatus $\left \{  T,N,B\right \}  $. Then
the following equalities%
\begin{align*}
\left[  T,N\right]   &  =\left \langle \left[  T,N\right]  ,B\right \rangle
B=2\tau_{G}B\\
\left[  T,B\right]   &  =\left \langle \left[  T,B\right]  ,N\right \rangle
N=-2\tau_{G}B
\end{align*}
hold.
\end{proposition}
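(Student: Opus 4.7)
The plan is to exploit the bi-invariance identity $(2.1)$ to prove that the trilinear form $(X,Y,Z) \mapsto \langle [X,Y], Z\rangle$ on $\mathfrak{g}$ is totally antisymmetric; once this is in hand, both identities drop out by expanding $[T,N]$ and $[T,B]$ in the orthonormal Frenet frame $\{T,N,B\}$ and identifying the surviving coefficient via Definition \ref{2.2}.

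Step one. Antisymmetry in the first two slots is immediate from $[X,Y] = -[Y,X]$. For antisymmetry in the last two slots, I would apply $(2.1)$ twice:
\[
\langle [X,Y], Z\rangle \;=\; \langle X, [Y,Z]\rangle \;=\; -\langle X, [Z,Y]\rangle \;=\; -\langle [X,Z], Y\rangle.
\]
These two transpositions generate the full symmetric group on three letters, so the form is totally antisymmetric in $(X,Y,Z)$; in particular it vanishes whenever any two of its arguments coincide.

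Step two. For the first identity, total antisymmetry gives $\langle [T,N], T\rangle = 0$ and $\langle [T,N], N\rangle = 0$, so expanding $[T,N]$ in the Frenet frame yields $[T,N] = \langle [T,N], B\rangle B$, which equals $2\tau_{G}B$ by Definition \ref{2.2}. The same argument applied to $[T,B]$ gives $\langle [T,B], T\rangle = \langle [T,B], B\rangle = 0$, hence $[T,B] = \langle [T,B], N\rangle N$, and by the transposition established in step one,
\[
\langle [T,B], N\rangle \;=\; -\langle [T,N], B\rangle \;=\; -2\tau_{G},
\]
so $[T,B] = -2\tau_{G}N$. (The trailing ``$=-2\tau_{G}B$'' in the displayed statement appears to be a typographical slip for $-2\tau_{G}N$, in agreement with the middle member of the equality.)

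I do not anticipate a substantive obstacle: everything rests on the algebraic identity $(2.1)$ together with the orthonormality of $\{T,N,B\}$. The only mild point to notice is that bi-invariance upgrades the ``associativity'' identity $(2.1)$ to full $S_{3}$-antisymmetry, and this upgrade is exactly what kills two of the three Frenet components of each bracket.
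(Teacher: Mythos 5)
Your proof is correct and follows essentially the same route as the paper: expand $[T,N]$ (resp.\ $[T,B]$) in the orthonormal frame $\{T,N,B\}$ and read off the only surviving coefficient via Definition \ref{2.2}; you merely make explicit the total antisymmetry of $\langle [X,Y],Z\rangle$ coming from \eqref{2-1}, which the paper uses tacitly when it declares $\lambda_{1}=\lambda_{2}=0$. You are also right that the ``$-2\tau_{G}B$'' in the displayed statement is a misprint for $-2\tau_{G}N$, which is exactly what the paper's own proof concludes.
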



\begin{proof}
Let $\alpha:I\subset \mathbb{R\rightarrow}G$ be an arc length parametrized
curve with the Frenet apparatus $\left \{  T,N,B\right \}  $. Since $\left[
T,N\right]  \in Sp\left \{  T,N,B\right \}  ,$ we can write%
\begin{equation}
\left[  T,N\right]  =\lambda_{1}T+\lambda_{2}N+\lambda_{3}B.\label{3-1}%
\end{equation}
If we multiply the two sides of the Eq. \eqref{3-1} with $T,$ $N$ and $B,$
respectively%
\begin{align*}
\left \langle \left[  T,N\right]  ,T\right \rangle  &  =\lambda_{1}=0,\\
\left \langle \left[  T,N\right]  ,N\right \rangle  &  =\lambda_{2}=0,\\
\left \langle \left[  T,N\right]  ,B\right \rangle  &  =\lambda_{3}.
\end{align*}
Thus we can write%
\[
\left[  T,N\right]  =\left \langle \left[  T,N\right]  ,B\right \rangle B,
\]
or using the Eq. \eqref{2-4} and the last equation, we get%
\[
\left[  T,N\right]  =2\tau_{G}B.
\]
On the other hand, using a similar method we can easily show that%
\[
\left[  T,B\right]  =-2\tau_{G}N.
\]
Which complete the proof.
\end{proof}


\begin{proposition}
\label{3.5}Let $\alpha:I\subset \mathbb{R\rightarrow}G$ be a parametrized curve
with arc length parameter s and $\left \{  T,N,B\right \}  $ denote the Frenet
frame of the curve $\alpha$. If the curve $\alpha$ is a slant helix in $G$,
then the axis of $\alpha$ is%
\[
X=\left \{  \frac{\varkappa H\left(  1+H^{2}\right)  }{H^{\shortmid}}%
T+N+\frac{\varkappa \left(  1+H^{2}\right)  }{H^{\shortmid}}B\right \}
\cos \theta
\]
where $H=\dfrac{\tau-\tau_{G}}{\varkappa}$ is harmonic curvature function of
the curve $\alpha$ and $\theta \neq \frac{\pi}{2}$ is a constant angle.
\end{proposition}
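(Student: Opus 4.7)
The plan is to expand the unknown axis in the Frenet frame as $X = aT + bN + cB$ for smooth functions $a,b,c$ along $\alpha$, and to pin down the three coefficients from two ingredients: the slant-helix hypothesis and the left-invariance of $X$. From the defining relation $\langle N,X\rangle = \cos\theta$ we immediately read off $b = \cos\theta$. Because $X$ is left-invariant, its coordinates in a fixed left-invariant orthonormal basis are constant along $\alpha$, so $\dot X = 0$; combined with the covariant-derivative formula \eqref{2-3} this yields the key identity $D_T X = \tfrac{1}{2}[T,X]$.

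The main computation is then to evaluate both sides of this identity in the Frenet frame and match coefficients. Using the Frenet equations $D_T T = \varkappa N$, $D_T N = -\varkappa T + \tau B$, $D_T B = -\tau N$, the left-hand side expands to
$$D_T X = (a' - \varkappa\cos\theta)\,T + (a\varkappa - c\tau)\,N + (c' + \tau\cos\theta)\,B.$$
For the right-hand side, Proposition \ref{3.4} gives $[T,N] = 2\tau_G B$ and $[T,B] = -2\tau_G N$, and since $[T,T]=0$ we obtain
$$\tfrac{1}{2}[T,X] = -c\,\tau_G\,N + \tau_G\cos\theta\,B.$$

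Matching $T$, $N$, and $B$ components produces the system $a' = \varkappa\cos\theta$, $\;a\varkappa = c(\tau-\tau_G) = c\varkappa H$, and $c' = -(\tau-\tau_G)\cos\theta = -\varkappa H\cos\theta$. The middle equation is purely algebraic and gives $a = cH$; differentiating this relation and substituting the expressions for $a'$ and $c'$ collapses the remaining two into $cH' = \varkappa(1+H^2)\cos\theta$. Hence
$$c = \frac{\varkappa(1+H^2)\cos\theta}{H'}, \qquad a = cH = \frac{\varkappa H(1+H^2)\cos\theta}{H'},$$
and reassembling $X = aT + (\cos\theta)N + cB$ produces exactly the claimed formula. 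The only conceptually subtle step is the conversion of $D_T X$ into $\tfrac{1}{2}[T,X]$ through the vanishing of $\dot X$ for a left-invariant field; once that reduction is in place the remainder is forced linear algebra plus a single differentiation, and I expect no real obstacle beyond careful bookkeeping of signs when invoking Proposition \ref{3.4}.
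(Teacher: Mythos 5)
Your proposal is correct and is essentially the paper's own argument in dual form: your coefficients $a,b,c$ are exactly $\langle T,X\rangle,\langle N,X\rangle,\langle B,X\rangle$, and matching the $T$-, $N$-, $B$-components of $D_{T}X=\tfrac{1}{2}\left[  T,X\right]$ yields precisely the three scalar relations the paper obtains by differentiating $\langle N,X\rangle=\cos\theta$ twice and applying Proposition \ref{3.4} (namely $a=cH$, $a^{\shortmid}=\varkappa\cos\theta$, $c^{\shortmid}=-\varkappa H\cos\theta$). No gap; the same implicit assumptions ($\varkappa\neq0$, $H^{\shortmid}\neq0$) are made in both arguments.
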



\begin{proof}
If the axis of slant helix $\alpha$ is $X$, then we can write%
\[
X=\lambda_{1}T+\lambda_{2}N+\lambda_{3}B
\]
where $\lambda_{1}=\left \langle T,X\right \rangle ,$ $\lambda_{2}=\left \langle
N,X\right \rangle $ and $\lambda_{3}=\left \langle B,X\right \rangle .$

And we know from the Definition \ref{3.1} that%
\begin{equation}
\left \langle N(s),X\right \rangle =\cos \theta \text{ for all }s\in I,\label{3-2}%
\end{equation}
where the left-invariant vector field $X\in \mathfrak{g}$ is unit length and
$\theta$ is a constant angle between $X$ and $N$ which is the principal normal
vector field of the curve $\alpha$. By differentiating $\left \langle
N(s),X\right \rangle =\cos \theta,$ we get%
\[
\left \langle D_{T}N,X\right \rangle +\left \langle N,D_{T}X\right \rangle =0,
\]
or using the Eq. \eqref{2-3} and the Frenet formulas%
\[
-\kappa \left \langle T,X\right \rangle +\tau \left \langle B,X\right \rangle
-\dfrac{1}{2}\left \langle \left[  T,N\right]  ,X\right \rangle =0,
\]
and with the help of the Proposition \ref{3.4}, we get%
\begin{equation}
\left \langle T,X\right \rangle =H\left \langle B,X\right \rangle ,\label{3-3}%
\end{equation}
where $H=\dfrac{\tau-\tau_{G}}{\varkappa}$ is harmonic curvature function of
the curve $\alpha$.

Again differentiating the Eq. \eqref{3-3}, we have%
\[
\left \langle D_{T}T,X\right \rangle +\left \langle T,D_{T}X\right \rangle
=H^{\shortmid}\left \langle B,X\right \rangle +H\left \{  \left \langle
D_{T}B,X\right \rangle +\left \langle B,D_{T}X\right \rangle \right \}
\]
then by using the Eq. \eqref{2-3} and the Proposition \ref{3.4} we obtain%
\begin{equation}
\left \langle B,X\right \rangle =\frac{\varkappa \left(  1+H^{2}\right)
}{H^{\shortmid}}\left \langle N,X\right \rangle .\label{3-4}%
\end{equation}
Then if we write the Eq. \eqref{3-4} in the Eq. \eqref{3-3}, we get%
\begin{equation}
\left \langle T,X\right \rangle =\frac{\varkappa H}{H^{\shortmid}}\left(
1+H^{2}\right)  \left \langle N,X\right \rangle .\label{3-5}%
\end{equation}
Consequently, using the equations \eqref{3-2}, \eqref{3-4} and \eqref{3-5} the
axis of slant helix $\alpha$ is given by
\[
X=\left \{  \frac{\varkappa H\left(  1+H^{2}\right)  }{H^{\shortmid}}%
T+N+\frac{\varkappa \left(  1+H^{2}\right)  }{H^{\shortmid}}B\right \}
\cos \theta,
\]
which completes the proof.
\end{proof}


\begin{theorem}
\label{3.6}Let $\alpha:I\subset \mathbb{R\rightarrow}G$ \ be a unit speed curve
with the Frenet apparatus $\left(  T,N,B,\varkappa,\tau \right)  $. Then
$\alpha$ is a slant helix if and only if%
\[
\sigma_{N}=\frac{\varkappa(1+H^{2})^{\frac{3}{2}}}{H^{\shortmid}}=\tan \theta
\]
is a constant where $H$ is a harmonic curvature function of the curve $\alpha$
and $\theta \neq \frac{\pi}{2}$ is a constant.
\end{theorem}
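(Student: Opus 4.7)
My strategy is to prove both directions using the explicit formula for the axis $X$ derived in Proposition~\ref{3.5}.

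For the forward implication, I would assume $\alpha$ is a slant helix and invoke Proposition~\ref{3.5} to write
\[
X = \cos\theta\left\{\frac{\varkappa H(1+H^{2})}{H^{\shortmid}}T + N + \frac{\varkappa(1+H^{2})}{H^{\shortmid}}B\right\}.
\]
The key observation is that $X$ is required to be unit-length. Expanding $\langle X, X\rangle = 1$ in the orthonormal Frenet frame, the $T$- and $B$-contributions combine through the common factor $\varkappa^{2}(1+H^{2})^{2}/(H^{\shortmid})^{2}$ and, using $H^{2}+1$ to collect terms, the identity reduces to
\[
\cos^{2}\theta\left(1 + \frac{\varkappa^{2}(1+H^{2})^{3}}{(H^{\shortmid})^{2}}\right) = 1,
\]
from which $\sigma_{N} = \varkappa(1+H^{2})^{3/2}/H^{\shortmid} = \tan\theta$ is immediate.

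For the converse, I would take the constancy of $\sigma_{N}$ as given, set $\theta$ by $\tan\theta = \sigma_{N}$ (so $\theta \neq \pi/2$), and \emph{define} the candidate axis $X$ by the same formula as in Proposition~\ref{3.5}. By construction $\langle N, X\rangle = \cos\theta$, and the norm calculation of the forward direction, run in reverse, gives $|X| = 1$. What must still be checked — and this is the technical heart of the argument — is that $X$ is genuinely left-invariant along $\alpha$, i.e.\ that $\overset{\cdot}{X} = 0$ in the decomposition $D_{T}X = \overset{\cdot}{X} + \tfrac{1}{2}[T, X]$ of Eq.~\eqref{2-3}.

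To verify $\overset{\cdot}{X} = 0$, I would compute $D_{T}X$ via the Frenet formulas and $[T, X]$ via Proposition~\ref{3.4} (which supplies $[T,N] = 2\tau_{G}B$ and $[T,B] = -2\tau_{G}N$), then match coefficients in the Frenet basis. Writing $X = \cos\theta\,(aT + N + bB)$ with $a = bH$, this matching reduces to three scalar identities, one per component. The $N$-equation is automatic from $a = bH$ together with $H = (\tau-\tau_{G})/\varkappa$. The $T$- and $B$-equations become $a^{\shortmid} = \varkappa$ and $b^{\shortmid} = -\varkappa H$, respectively. Rewriting $b = \sigma_{N}/\sqrt{1+H^{2}}$ and differentiating, the constancy of $\sigma_{N}$ makes the $H^{\shortmid}$ factors cancel cleanly and produces exactly $b^{\shortmid} = -\varkappa H$; then $a^{\shortmid} = (bH)^{\shortmid} = -\varkappa H^{2} + bH^{\shortmid} = \varkappa$ after substituting $b$. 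The main obstacle is really just recognizing that the hypothesis $\sigma_{N} = $ const is precisely what is needed to force these two derivative identities; no further geometric input is required.
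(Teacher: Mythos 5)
Your proposal is correct, and the forward direction is exactly the paper's argument: invoke Proposition \ref{3.5} and impose $\left\Vert X\right\Vert =1$, which collapses to $\cos^{2}\theta\left(1+\varkappa^{2}(1+H^{2})^{3}/(H^{\shortmid})^{2}\right)=1$ and hence $\sigma_{N}=\tan\theta$. Where you differ is the converse: the paper dismisses it with ``if $\sigma_{N}$ is constant then the result is obvious,'' whereas you actually construct the candidate axis $X=\cos\theta\,(bHT+N+bB)$ with $b=\sigma_{N}(1+H^{2})^{-1/2}$ and verify left-invariance by checking $\overset{\cdot}{X}=0$, i.e.\ $D_{T}X=\tfrac{1}{2}[T,X]$; matching Frenet components via Proposition \ref{3.4} does give precisely the three conditions you state ($a=bH$ automatic, $a^{\shortmid}=\varkappa$, $b^{\shortmid}=-\varkappa H$), and the constancy of $\sigma_{N}$ yields both derivative identities, so your computation is sound and supplies the verification the paper omits (modulo the sign ambiguity $\sigma_{N}=\pm\tan\theta$ and the standing assumption $H^{\shortmid}\neq0$, which the paper also leaves implicit).
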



\begin{proof}
If the axis of slant helix $\alpha$ is $X$, then using the Proposition
\ref{3.5} we have%
\[
X=\left \{  \frac{\varkappa H\left(  1+H^{2}\right)  }{H^{\shortmid}}%
T+N+\frac{\varkappa \left(  1+H^{2}\right)  }{H^{\shortmid}}B\right \}
\cos \theta.
\]
Since $X$ is unit lenght vector field then we can easily see that%
\[
\frac{\varkappa(H^{2}+1)^{\frac{3}{2}}}{H^{\shortmid}}=\tan \theta
\]
is a constant.

Conversely, if $\sigma_{N}\left(  s\right)  $ is constant then the result is
obvious. This complete the proof.
\end{proof}


In the following remark, we note that three dimensional Lie groups admitting
bi-invariant metrics are $S^{3},$ $SO^{3}$ and Abelian Lie groups using the
same notation as in \cite{ciftci} and \cite{santo} as follows:


\begin{remark}
\label{3.7}Let $G$ be a Lie group with a bi-invariant metric $\left \langle
\text{ },\right \rangle $. Then the following equalities can be given in
different Lie groups.
\end{remark}

$i$ ) If $G$ is abelian group then $\tau_{G}=0.$

$ii)$ If $G$ is $SO^{3}$ then $\tau_{G}=\frac{1}{2}$.

$iii)$ If $G$ is $SU^{2}$ then $\tau_{G}=1$

(see for details \cite{ciftci} and \cite{santo}).


\begin{corollary}
\label{3.8}Let $\alpha$ be a unit speed curve with the Frenet apparatus
$\left \{  T,N,B\right \}  $ in the Abellian Lie group $G$. Then $\alpha$ is a
slant helix if and only if%
\[
\sigma_{N}=\frac{\left(  \varkappa^{2}+\tau^{2}\right)  ^{3/2}}{\varkappa
^{2}\left(  \dfrac{\tau}{\varkappa}\right)  ^{\shortmid}}%
\]
is a constant function.
\end{corollary}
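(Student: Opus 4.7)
The plan is to derive Corollary \ref{3.8} as a direct specialization of Theorem \ref{3.6} combined with Remark \ref{3.7}(i). The corollary is the Abelian-group instance of the general characterization already established, so no new geometric content is required; the work is entirely a substitution followed by an algebraic simplification.

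First I would invoke Remark \ref{3.7}(i) to record that in an Abelian Lie group $G$ we have $\tau_G = 0$. Consequently the harmonic curvature function of Definition \ref{3.2} collapses to
\[
H = \frac{\tau - \tau_G}{\varkappa} = \frac{\tau}{\varkappa}.
\]
Then I would plug this expression for $H$ into the slant-helix criterion of Theorem \ref{3.6}. Concretely, I would compute
\[
1 + H^{2} = \frac{\varkappa^{2} + \tau^{2}}{\varkappa^{2}}, \qquad (1+H^{2})^{3/2} = \frac{(\varkappa^{2} + \tau^{2})^{3/2}}{\varkappa^{3}},
\]
and $H^{\shortmid} = (\tau/\varkappa)^{\shortmid}$. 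Substituting these into $\sigma_N = \varkappa(1+H^2)^{3/2}/H^{\shortmid}$ produces exactly
\[
\sigma_N = \frac{(\varkappa^{2} + \tau^{2})^{3/2}}{\varkappa^{2}\,(\tau/\varkappa)^{\shortmid}},
\]
which is the expression claimed by the corollary.

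Finally, Theorem \ref{3.6} asserts that $\alpha$ is a slant helix if and only if $\sigma_N$ is constant, so the equivalence in the corollary follows immediately. There is no genuine obstacle here; the only care needed is the routine algebraic simplification of $1+H^{2}$ and the implicit assumption $\varkappa \neq 0$ (so that $H$ and its derivative make sense), which is already built into the Frenet-apparatus hypothesis of Theorem \ref{3.6}. Thus the proof is little more than a one-line substitution, and I would present it as such.
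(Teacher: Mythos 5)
Your proposal is correct and follows exactly the paper's own route: the paper proves Corollary \ref{3.8} by citing Remark \ref{3.7}(i) (so $\tau_G=0$, hence $H=\tau/\varkappa$) together with Theorem \ref{3.6}, and your explicit simplification of $\varkappa(1+H^2)^{3/2}/H^{\shortmid}$ to $(\varkappa^2+\tau^2)^{3/2}/\bigl(\varkappa^2(\tau/\varkappa)^{\shortmid}\bigr)$ is just the algebra the paper leaves implicit.
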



\begin{proof}
If $G$ is Abellian Lie group then using the above Remark and the Theorem
\ref{3.6} we have the result.
\end{proof}


So, the above Corollary shows that the study is a generalization of slant
helices defined by Izimuya \cite{izu} in Euclidean 3-space. Moreover, with a
similar proof, we have the following two corollaries.


\begin{corollary}
\label{3.9}Let $\alpha$ be unit speed curve with the Frenet apparatus
$\left \{  T,N,B\right \}  $ in the Lie group $SU^{2}$. Then $\alpha$ is a slant
helix if and only if%
\[
\sigma_{N}=\frac{\left(  \varkappa^{2}+\left(  \tau-1\right)  ^{2}\right)
^{3/2}}{\varkappa^{2}\left(  \dfrac{\tau-1}{\varkappa}\right)  ^{\shortmid}}%
\]
is a constant function.
\end{corollary}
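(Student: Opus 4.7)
The plan is to reduce the statement to a direct application of Theorem \ref{3.6}, using only the value of $\tau_G$ supplied by Remark \ref{3.7} in the case $G = SU^2$. First, I would recall that Theorem \ref{3.6} characterizes slant helices in an arbitrary three-dimensional Lie group with a bi-invariant metric by the condition that
\[
\sigma_N \;=\; \frac{\varkappa(1+H^2)^{3/2}}{H^{\shortmid}}
\]
is a constant function, where $H = (\tau - \tau_G)/\varkappa$ is the harmonic curvature function of Definition \ref{3.2}. Since $SU^2$ is one of the three Lie groups admitting a bi-invariant metric in dimension three, Theorem \ref{3.6} applies without modification, and the only ingredient still required is the specific value of $\tau_G$.

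Next, I would invoke Remark \ref{3.7}(iii), which gives $\tau_G = 1$ for $G = SU^2$. Substituting this into the definition of the harmonic curvature yields $H = (\tau - 1)/\varkappa$. From there the proof is a purely algebraic simplification: one computes
\[
1 + H^2 \;=\; \frac{\varkappa^2 + (\tau-1)^2}{\varkappa^2}, \qquad (1+H^2)^{3/2} \;=\; \frac{\bigl(\varkappa^2 + (\tau-1)^2\bigr)^{3/2}}{\varkappa^3},
\]
and then multiplies by $\varkappa$ and divides by $H^{\shortmid} = \bigl((\tau-1)/\varkappa\bigr)^{\shortmid}$ to obtain the claimed expression for $\sigma_N$.

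There is essentially no obstacle in this argument, since Theorem \ref{3.6} has already done all of the geometric work; the only point requiring any care is the bookkeeping in the algebraic simplification so that the factors of $\varkappa$ match up correctly in numerator and denominator. The conclusion of the corollary then follows by applying Theorem \ref{3.6} in both directions: $\alpha$ is a slant helix in $SU^2$ if and only if the substituted expression is a constant function, which is exactly the stated form of $\sigma_N$.
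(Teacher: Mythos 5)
Your proposal is correct and is exactly the paper's intended argument: the paper proves Corollary \ref{3.8} by combining Remark \ref{3.7} with Theorem \ref{3.6} and states that Corollaries \ref{3.9} and \ref{3.10} follow "with a similar proof," which is precisely your substitution of $\tau_G=1$ into $H=(\tau-\tau_G)/\varkappa$ and the ensuing algebraic simplification of $\sigma_N$. The bookkeeping with the powers of $\varkappa$ in your computation is also correct.
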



\begin{corollary}
\label{3.10}Let $\alpha$ be unit speed curve with the Frenet apparatus
$\left \{  T,N,B\right \}  $ in the Lie group $SO^{3}$. Then $\alpha$ is a slant
helix if and only if%
\[
\sigma_{N}=\frac{\left(  \varkappa^{2}+\left(  \tau-\frac{1}{2}\right)
^{2}\right)  ^{3/2}}{\varkappa^{2}\left(  \dfrac{\tau-\frac{1}{2}}{\varkappa
}\right)  ^{\shortmid}}%
\]
is a constant function.
\end{corollary}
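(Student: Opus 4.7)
The plan is to obtain Corollary \ref{3.10} as a direct specialization of Theorem \ref{3.6} once the value of $\tau_{G}$ on $SO^{3}$ has been fixed by Remark \ref{3.7}(iii). Concretely, Theorem \ref{3.6} already gives the required equivalence in full generality: $\alpha$ is a slant helix if and only if
\[
\sigma_{N}=\frac{\varkappa(1+H^{2})^{\frac{3}{2}}}{H^{\shortmid}}
\]
is constant, where $H=(\tau-\tau_{G})/\varkappa$ is the harmonic curvature function of Definition \ref{3.2}. So essentially nothing new has to be proved; one only needs to rewrite this quantity in terms of $\varkappa$ and $\tau$ alone under the standing assumption that the ambient Lie group is $SO^{3}$.

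First I would invoke Remark \ref{3.7}(iii) to set $\tau_{G}=\tfrac{1}{2}$, so that the harmonic curvature becomes
\[
H=\frac{\tau-\tfrac{1}{2}}{\varkappa}.
\]
Next I would compute
\[
1+H^{2}=\frac{\varkappa^{2}+\left(\tau-\tfrac{1}{2}\right)^{2}}{\varkappa^{2}},
\qquad
(1+H^{2})^{3/2}=\frac{\left(\varkappa^{2}+\left(\tau-\tfrac{1}{2}\right)^{2}\right)^{3/2}}{\varkappa^{3}},
\]
and therefore
\[
\varkappa(1+H^{2})^{3/2}=\frac{\left(\varkappa^{2}+\left(\tau-\tfrac{1}{2}\right)^{2}\right)^{3/2}}{\varkappa^{2}}.
\]
Dividing by $H^{\shortmid}=\bigl(\tfrac{\tau-1/2}{\varkappa}\bigr)^{\shortmid}$ puts $\sigma_{N}$ into exactly the form stated in the corollary.

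Finally I would conclude: by Theorem \ref{3.6}, $\alpha$ is a slant helix in $SO^{3}$ if and only if this rewritten $\sigma_{N}$ is constant, which is precisely the claim. There is no real obstacle here; the only care needed is the bookkeeping in substituting $\tau_{G}=\tfrac{1}{2}$ into both $1+H^{2}$ and $H^{\shortmid}$, and verifying that the factors of $\varkappa$ combine correctly so that the $\varkappa^{3}$ from $(1+H^{2})^{3/2}$ against the extra $\varkappa$ on top leaves the $\varkappa^{2}$ denominator that appears in the stated formula.
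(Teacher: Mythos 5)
Your proposal is correct and is essentially the paper's own argument: the paper proves this corollary (like Corollary \ref{3.8}) simply by combining Remark \ref{3.7} with Theorem \ref{3.6}, and your explicit substitution $\tau_{G}=\tfrac{1}{2}$ with the bookkeeping of the $\varkappa$ factors just spells out that specialization. Nothing further is needed.
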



\section{Spherical Images of Slant Helices in the three dimensional Lie group}

In Euclidean geometry, the spherical indicatrix of a space curve is defined as
follows: Let $\alpha$ be a unit speed regular curve in Euclidean $3$-space
with Frenet vectors $t$ , $n$ and $b$. The unit tangent vectors along the
curve $\alpha$ generate a curve $\alpha_{T}$ on the sphere of radius 1 about
the origin. The curve $\alpha_{T}$ is called the spherical indicatrix of $t$
or more commonly, $\alpha_{T}$ is called tangent indicatrix of the curve
$\alpha$. If $\alpha=\alpha(s)$ is a natural representation of $\alpha$, then
$\alpha_{T}=T(s)$ will be a representation of $\alpha_{T}$. Similarly one
considers the principal normal indicatrix $\alpha_{N}=N(s)$ and binormal
indicatrix $\alpha_{B}=B(s)$. It is clear that, this definition is related
with the spherical curve \cite{struik}.


In this section, firstly we define spherical indicatrices of slant helices
with the help of the studies \cite{noakes,ripol} and then investigate the
relation between slant helices and their spherical indicatrices in
3-dimensional Lie group. Morever, we give some theorems with their proofs and
some examples in special Lie groups.

\subsection{Tangent indicatrices of slant helices:}

\begin{definition}
\label{4.1}Let $\alpha:I\subset \mathbb{R\rightarrow}G$ \ be an arc-lenghted
regular curve. Its tangent indicatrix is the parametrized curve $\beta
:I\subset \mathbb{R\rightarrow}S^{2}\subset \mathfrak{g}$ defined by%
\[
\beta \left(  s^{\ast}\right)  =T(s)=\sum_{i=1}^{3}\text{ }t_{i}X_{i}\text{ for
all }s\in I
\]
where $\left \{  X_{1},X_{2},X_{3}\right \}  $ is an orthonormal basis of
$\mathfrak{g}$ and $s^{\ast}$ is the arc length parameter of $\beta.$
\end{definition}


\begin{theorem}
\label{4.2}Let $\alpha$\ be an arc-lenghted regular curve and $\beta$ be the
tangent indicatrix of the curve $\alpha.$ Then the curve $\alpha$ is a slant
helix in three dimensional Lie group $G$ if and only if \ the curve $\beta$ is
a general helix on $S^{2}$.
\end{theorem}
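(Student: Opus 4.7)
The plan is to realise $\beta$ explicitly as a curve in $\mathfrak{g}\cong\mathbb{R}^{3}$, compute its unit tangent in closed form, and then show that the Euclidean general-helix condition on $\beta$ (constant angle between its tangent and a fixed direction) translates word for word into the slant-helix condition on $\alpha$ (constant angle between $N$ and a fixed left-invariant vector field). The point is that the ``axis'' on each side will correspond to the same object under the identification of left-invariant fields of $G$ with fixed vectors of $\mathfrak{g}$.

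First I would differentiate $\beta(s^{\ast})=T(s)=\sum t_{i}X_{i}$ coordinatewise in $\mathfrak{g}$, so that $\dot{\beta}=\dot{T}$. Applying \eqref{2-3} with $W=T$ gives $\dot{T}=D_{T}T-\tfrac{1}{2}[T,T]=D_{T}T$, and by the Frenet formula $D_{T}T=\varkappa N$. Hence $\dot{\beta}=\varkappa N$, which forces $ds^{\ast}/ds=\varkappa$, and therefore the unit tangent of $\beta$ at parameter $s^{\ast}$ is
\[
T_{\beta}(s^{\ast})=N(s),
\]
viewed through the orthonormal left-invariant frame $\{X_{1},X_{2},X_{3}\}$ as an element of $\mathfrak{g}$.

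Next I would interpret ``$\beta$ is a general helix on $S^{2}$'' in the ambient Euclidean sense: there exists a fixed unit vector $v\in\mathfrak{g}\cong\mathbb{R}^{3}$ with $\langle T_{\beta}(s^{\ast}),v\rangle$ constant. Under the bijection between unit vectors of $\mathfrak{g}$ and left-invariant unit vector fields of $G$, write $v$ as the value at $e$ of a left-invariant unit field $X$. Because the bi-invariant metric restricted to the left-invariant frame coincides with the Euclidean inner product on coordinate expansions, this condition is precisely $\langle N(s),X\rangle=\mathrm{const}$, which is the defining property of a slant helix from Definition \ref{3.1}. Both implications drop out simultaneously.

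The main obstacle is bookkeeping rather than analysis: one must track two identifications in parallel, namely (i) the passage from the covariant derivative $D_{T}$ in $G$ to the ordinary derivative in $\mathfrak{g}$, which by \eqref{2-3} contributes a bracket term that happens to vanish when the field being differentiated is $T$ itself, and (ii) the passage between the fixed direction $v\in\mathbb{R}^{3}$ serving as an axis on the $S^{2}$ side and the left-invariant field $X$ serving as an axis on the $G$ side. Once these are set up cleanly, the equivalence of ``$\langle T_{\beta},v\rangle$ constant'' and ``$\langle N,X\rangle$ constant'' is automatic and no further computation with $\varkappa$, $\tau$ or $\tau_{G}$ is needed, which is consistent with the fact that the characterisation in Theorem~\ref{3.6} only enters implicitly through $T_{\beta}=N$.
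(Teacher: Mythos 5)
Your proposal is correct, but it takes a genuinely different route from the paper's proof. You note that in the left-invariant orthonormal frame $\dot T=D_TT-\tfrac12[T,T]=\varkappa N$, hence $ds^{\ast}/ds=\varkappa$ and $T_{\beta}(s^{\ast})=N(s)$, and then you read ``general helix'' for $\beta\subset S^{2}\subset\mathfrak{g}$ as the constant-angle condition with a fixed unit vector $v\in\mathfrak{g}$, equivalently (under the identification of left-invariant fields with constant coefficient vectors in the frame $\{X_{1},X_{2},X_{3}\}$) with a unit left-invariant field $X$; since the bi-invariant metric is the dot product of frame coordinates, $\left\langle T_{\beta},v\right\rangle=\left\langle N,X\right\rangle$, and the equivalence with Definition \ref{3.1} is immediate in both directions, no computation of $\varkappa_{\beta}$ or $\tau_{\beta}$ being required. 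The paper instead computes the full Frenet apparatus of $\beta$ ($\varkappa_{\beta}=\sqrt{1+H^{2}}$, $N_{\beta}$, $B_{\beta}$, and $\tau_{\beta}-\tau_{G_{\beta}}=H^{\shortmid}/(\varkappa(1+H^{2}))$), observes that $(\tau_{\beta}-\tau_{G_{\beta}})/\varkappa_{\beta}=1/\sigma_{N}$, and concludes via the Lancret-type Theorem \ref{2.3} combined with Theorem \ref{3.6}. Your argument is shorter and conceptually cleaner because the constant-angle property is the \emph{definition} of a general helix, the Lancret criterion being only a characterization; what the paper's longer computation buys is the explicit Frenet data of $\beta$, which is reused afterwards (Corollary \ref{4.3} that $\tau_{G_{\beta}}=\tau_{G}$, Corollary \ref{4.12}, and the parallel proofs for the binormal indicatrix and the involutes). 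Two points you should state explicitly: regularity of $\beta$ requires $\varkappa>0$ for $s^{\ast}$ to be an admissible parameter; and Definition \ref{3.1} excludes $\theta=\pi/2$ while the general-helix condition does not, so the degenerate case $\left\langle T_{\beta},v\right\rangle\equiv 0$ must be excluded --- a blemish shared by the paper's own proof, where $\sigma_{N}=\tan\theta$ is then undefined.
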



\begin{proof}
We assume that the curve $\alpha$ is a slant helix in a three dimensional Lie
group and $\alpha_{T}$ is the tangent indicatrix of the curve $\alpha.$ From
the Definition \ref{4.1} we get%
\[
\beta \left(  s^{\ast}\right)  =T(s)
\]
then differentiating the last equation and using the Eq. \eqref{2-3}, we have%
\begin{align*}
\frac{d\beta}{ds^{\ast}}\frac{ds^{\ast}}{ds} &  =\overset{\cdot}{T}%
=D_{T}T-\dfrac{1}{2}\left[  T,T\right] \\
\frac{d\beta}{ds^{\ast}}\frac{ds^{\ast}}{ds} &  =\varkappa N.
\end{align*}
Then assuming that $\varkappa \rangle0$ we obtain%
\begin{equation}
\frac{ds^{\ast}}{ds}=\varkappa \label{4-1}%
\end{equation}
and%
\begin{equation}
T_{\beta}\left(  s^{\ast}\right)  =N(s).\label{4-2}%
\end{equation}
If we differentiate the last equation and use Frenet formulas then we obtain%
\begin{align*}
\varkappa_{\beta}N_{\beta}\left(  s^{\ast}\right)  \frac{ds^{\ast}}{ds} &
=\overset{\cdot}{N}=D_{T}N-\dfrac{1}{2}\left[  T,N\right] \\
\varkappa_{\beta}N_{\beta}\left(  s^{\ast}\right)  \varkappa &  =-\kappa
T+\tau B-\dfrac{1}{2}\left \langle \left[  T,N\right]  ,B\right \rangle B
\end{align*}
or with the help of the Proposition \ref{3.4}, we get%
\[
\varkappa_{\beta}N_{\beta}\left(  s^{\ast}\right)  =-T+HB
\]

where $\varkappa_{\beta}$ is the curvature of $\beta.$ Hence%
\[
\varkappa_{\beta}=\sqrt{1+H^{2}}%
\]
and%
\begin{equation}
N_{\beta}\left(  s^{\ast}\right)  =-\tfrac{1}{\sqrt{1+H^{2}}}T+\tfrac{H}%
{\sqrt{1+H^{2}}}B\label{4-3}%
\end{equation}
Then using the Eq.\eqref{4-2} and the Eq.\eqref{4-3} we have%
\begin{align}
B_{\beta}\left(  s^{\ast}\right)   & =T_{\beta}\left(  s^{\ast}\right)  \times
N_{\beta}\left(  s^{\ast}\right) \nonumber \\
& =\tfrac{H}{\sqrt{1+H^{2}}}T+\tfrac{1}{\sqrt{1+H^{2}}}B.\label{4-4}%
\end{align}
Using the differentiation of the last equation and the Proposition \ref{3.4},
this implies%
\[
\left(  \tau_{\beta}-\tau_{G_{\beta}}\right)  N_{\beta}\left(  s^{\ast
}\right)  \frac{ds^{\ast}}{ds}=-\tfrac{H^{\prime}}{\left(  1+H^{2}\right)
^{3/2}}T+\tfrac{HH^{\prime}}{\left(  1+H^{2}\right)  ^{3/2}}B
\]
or using the Eq.\eqref{4-1}, we have%
\[
\left(  \tau_{\beta}-\tau_{G_{\beta}}\right)  N_{\beta}\left(  s^{\ast
}\right)  =-\tfrac{H^{\prime}}{\varkappa \left(  1+H^{2}\right)  ^{3/2}%
}T+\tfrac{HH^{\prime}}{\varkappa \left(  1+H^{2}\right)  ^{3/2}}B
\]
where $\tau_{G_{\beta}}=\frac{1}{2}\left \langle \left[  T_{\beta},N_{\beta
}\right]  ,B_{\beta}\right \rangle .$ Thus we compute%
\[
\tau_{\beta}=\frac{H^{\shortmid}}{\varkappa \left(  1+H^{2}\right)  }%
+\tau_{G_{\beta}}%
\]
where $\tau_{\beta}$ is the torsion of $\beta.$ The we can easily see that
$\tfrac{\tau_{\beta}-\tau_{G_{\beta}}}{\varkappa_{\beta}}=\frac{H^{\shortmid}%
}{\varkappa \left(  1+H^{2}\right)  ^{3/2}}$\ is a constant function. In other
words, using the Theorem \ref{2.3} we can easily obtain that $\beta$ is a
general helix.

Conversely, we assume that $\beta$ is a general helix then we can easily see
that $\alpha$ is a slant helix. These complete the proof.
\end{proof}


\begin{corollary}
\label{4.3}Let $\alpha$\ be an arc-lenghted regular curve with the Frenet
vector fields $\left \{  T,N,B\right \}  $ in the Lie group $G$ and $\beta$ be
the tangent indicatrix of the curve $\alpha.$ Then $\tau_{G_{\beta}}=\tau_{G}$
for the curves $\alpha$ and $\beta.$
\end{corollary}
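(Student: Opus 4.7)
The plan is to compute $\tau_{G_{\beta}} = \tfrac{1}{2}\langle [T_\beta, N_\beta], B_\beta\rangle$ directly from the expressions for the Frenet frame of $\beta$ that were already established inside the proof of Theorem \ref{4.2}, namely $T_\beta = N$, $N_\beta = -\tfrac{1}{\sqrt{1+H^2}}T + \tfrac{H}{\sqrt{1+H^2}}B$, and $B_\beta = \tfrac{H}{\sqrt{1+H^2}}T + \tfrac{1}{\sqrt{1+H^2}}B$. The only new ingredient we need is the bracket $[N,B]$, which is not recorded in Proposition \ref{3.4} and must be derived separately.

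To obtain $[N,B]$, I would invoke the bi-invariance identity \eqref{2-1}. Combining $\langle X,[Y,Z]\rangle = \langle [X,Y],Z\rangle$ with the antisymmetry of the bracket gives the cyclic symmetry $\langle X,[Y,Z]\rangle = \langle Y,[Z,X]\rangle = \langle Z,[X,Y]\rangle$. Applied to the triple $(T,N,B)$ this yields $\langle T,[N,B]\rangle = \langle [T,N],B\rangle = 2\tau_G$; meanwhile $\langle N,[N,B]\rangle = \langle [N,N],B\rangle = 0$, and $\langle B,[N,B]\rangle = \langle [B,N],B\rangle = -\langle B,[N,B]\rangle$ forces the $B$-component to vanish as well. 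Hence $[N,B] = 2\tau_G T$, which is the natural companion to the identities of Proposition \ref{3.4}.

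With these brackets in hand, I would expand by bilinearity and antisymmetry to obtain
\[
[T_\beta,N_\beta] = \tfrac{1}{\sqrt{1+H^2}}[T,N] + \tfrac{H}{\sqrt{1+H^2}}[N,B] = \tfrac{2\tau_G H}{\sqrt{1+H^2}}T + \tfrac{2\tau_G}{\sqrt{1+H^2}}B.
\]
Pairing with $B_\beta$ then gives $\langle [T_\beta,N_\beta],B_\beta\rangle = \tfrac{2\tau_G(H^2+1)}{1+H^2} = 2\tau_G$, so that $\tau_{G_\beta} = \tau_G$ as claimed.

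The only non-routine step is the derivation of $[N,B] = 2\tau_G T$; once the cyclic symmetry is noted, the remainder is a short algebraic simplification in which the $(1+H^2)$ denominators cancel. Conceptually the result reflects the fact that the scalar $2\tau_G$ is the common structure constant of the bi-invariant bracket relative to any oriented orthonormal frame at a point, and the tangent indicatrix inherits its frame at $s^{\ast}$ from the frame of $\alpha$ at $s$, so the two group torsions must coincide.
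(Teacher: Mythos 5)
Your proof is correct and follows essentially the same route as the paper, whose proof simply declares the claim obvious from the frame equations \eqref{4-2}, \eqref{4-3}, \eqref{4-4}; you carry out exactly the computation the authors leave implicit, including the needed companion identity $[N,B]=2\tau_{G}T$ (which the pointwise bracket's bilinearity over functions makes legitimate). The $(1+H^{2})$ cancellation and the conclusion $\tau_{G_{\beta}}=\tau_{G}$ check out.
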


\begin{proof}
It is obvious using the equations \eqref{4-2}, \eqref{4-3} and \eqref{4-4}.
\end{proof}

\subsection{Normal indicatrices of slant helices:}

\begin{definition}
\label{4.4}Let $\alpha:I\subset \mathbb{R\rightarrow}G$ \ be an arc-lenghted
regular curve. Its normal indicatrix is the parametrized curve $\gamma
:I\subset \mathbb{R\rightarrow}S^{2}\subset \mathfrak{g}$ defined by%
\[
\gamma \left(  s^{\ast}\right)  =N(s)=\sum_{i=1}^{3}\text{ }n_{i}X_{i}\text{
for all }s\in I
\]
where $\left \{  X_{1},X_{2},X_{3}\right \}  $ is an orthonormal basis of
$\mathfrak{g}$ and $s^{\ast}$ is the arc length parameter of $\gamma.$
\end{definition}


\begin{theorem}
\label{4.5}Let $\alpha$\ be an arc-lenghted slant helix in three dimensional
Lie Group $G$ and $\gamma$ be the normal indicatrix of the curve $\alpha.$
Then the curve $\gamma$ is a plane curve on $S^{2}$.
\end{theorem}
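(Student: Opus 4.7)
The plan is to exploit the defining property of the slant helix directly, bypassing any Frenet computation for $\gamma$ inside $\mathfrak{g}$. I would first fix the orthonormal basis $\{X_{1},X_{2},X_{3}\}$ of $\mathfrak{g}$ used in Definition \ref{4.4} to identify $\mathfrak{g}\cong\mathbb{R}^{3}$. Because the metric on $G$ is bi-invariant (hence left-invariant) and each $X_{i}$ is a left-invariant vector field, the frame $\{X_{1}|_{p},X_{2}|_{p},X_{3}|_{p}\}$ is orthonormal at \emph{every} point $p\in G$. Writing the axis $X$ of the slant helix as $X=\sum_{i=1}^{3}x_{i}X_{i}$, the coefficients $x_{i}\in\mathbb{R}$ are constants (since both $X$ and each $X_{i}$ are left-invariant), while $N(s)=\sum_{i=1}^{3}n_{i}(s)X_{i}$ with smooth $n_{i}$.

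Pointwise orthonormality of the frame lets me translate the slant-helix condition from Definition \ref{3.1} into the single scalar identity
\[
\sum_{i=1}^{3}x_{i}\,n_{i}(s)\,=\,\langle N(s),X\rangle\,=\,\cos\theta,\qquad s\in I.
\]
By Definition \ref{4.4}, the normal indicatrix is $\gamma(s^{\ast})=\sum_{i=1}^{3}n_{i}(s)X_{i}\in\mathfrak{g}$, so its Euclidean inner product in $\mathfrak{g}$ with the fixed vector $X_{e}=\sum_{i}x_{i}X_{i}|_{e}$ equals $\cos\theta$ for every $s^{\ast}$. Consequently the image of $\gamma$ lies in the affine hyperplane
\[
\Pi\,=\,\{v\in\mathfrak{g}\,:\,\langle v,X_{e}\rangle=\cos\theta\}\,\subset\,\mathfrak{g}\cong\mathbb{R}^{3},
\]
which is $2$-dimensional, and so $\gamma$ is a plane curve on $S^{2}$. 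In fact $\gamma$ sweeps out the circle $\Pi\cap S^{2}$ of radius $|\sin\theta|$ (a nondegenerate slice since $\theta\neq\pi/2$).

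The only subtle step is the reduction of the slant-helix condition $\langle N,X\rangle=\cos\theta$, which a priori is a pointwise equality of inner products on the distinct tangent spaces $T_{\alpha(s)}G$, to a single Euclidean identity on $\mathfrak{g}$. This is exactly where bi-invariance enters: left-invariance of the metric together with left-invariance of the frame $\{X_{i}\}$ makes the Gram matrix of $\{X_{i}|_{p}\}$ independent of $p$, so the expansion coefficients $n_{i}(s),x_{i}$ encode the inner product in the same way at $\alpha(s)$ as at $e$. Notably, no use is made of Theorem \ref{3.6}, of $\tau_{G}$, or of the harmonic curvature function $H$; the conclusion is a direct consequence of Definition \ref{3.1}, which explains why the statement requires no quantitative condition like the one appearing in the tangent-indicatrix theorem.
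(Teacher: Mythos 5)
Your proposal is correct, but it takes a genuinely different route from the paper. The paper proves the theorem computationally: it differentiates $\gamma(s^{\ast})=N(s)$ three times in $\mathfrak{g}$ using $\overset{\cdot}{W}=D_{T}W-\tfrac{1}{2}[T,W]$, Proposition \ref{3.4}, and the constancy of $\sigma_{N}$ (i.e.\ it invokes the characterization of Theorem \ref{3.6} rather than Definition \ref{3.1}), and then concludes $\tau_{\gamma}=\det\left(\gamma^{\shortmid},\gamma^{\shortparallel},\gamma^{\shortmid\shortmid\shortmid}\right)/\left\Vert \gamma^{\shortmid}\times\gamma^{\shortparallel}\right\Vert ^{2}=0$. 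You instead go straight from Definition \ref{3.1}: since the metric and the frame $\{X_{i}\}$ are left-invariant, the Gram matrix of the frame is constant, so $\left\langle N(s),X\right\rangle_{\alpha(s)}=\sum_{i}n_{i}(s)x_{i}=\cos\theta$ becomes a single linear constraint on the coefficient curve $\gamma=\sum_{i}n_{i}X_{i}$ in $\mathfrak{g}$, forcing $\gamma$ to lie in the affine plane $\left\langle v,X_{e}\right\rangle=\cos\theta$, hence on the circle of radius $\left\vert \sin\theta\right\vert$ on $S^{2}$. This is shorter, avoids the harmonic curvature $H$ and Theorem \ref{3.6} entirely, and yields the sharper geometric statement that $\gamma$ is (contained in) a circle; note it is also consistent with the paper's computation, since the curvature $\varkappa_{\gamma}=\sqrt{1+\sigma_{N}^{2}}/\left\vert \sigma_{N}\right\vert$ found there is exactly $1/\left\vert \sin\theta\right\vert$ when $\sigma_{N}=\tan\theta$. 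What the paper's longer computation buys is this quantitative Frenet data for $\gamma$ and an argument that proceeds purely from the invariants $(\varkappa,\tau,\tau_{G})$, i.e.\ from the condition $\sigma_{N}=\mathrm{const}$, without reference to the axis $X$; your argument, by contrast, leans on the axis form of the definition, which is legitimate here since the theorem's hypothesis is precisely Definition \ref{3.1}.
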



\begin{proof}
We assume that the curve $\alpha$ is a slant helix in a three dimensional Lie
group and $\gamma$ is the normal indicatrix of the curve $\alpha.$ From the
Definition \ref{4.4} we get%
\begin{equation}
\gamma \left(  s^{\ast}\right)  =N(s).\label{4-5}%
\end{equation}
Then differentiating the Eq. \eqref{4-5} and using the Eq. \eqref{2-3} we have%
\begin{align*}
\frac{d\gamma}{ds^{\ast}}\frac{ds^{\ast}}{ds} &  =\overset{\cdot}{N}%
=D_{T}N-\dfrac{1}{2}\left[  T,N\right] \\
&  =-\varkappa T+\tau B-\frac{1}{2}\left \langle \left[  T,N\right]
,B\right \rangle B\\
&  =-\varkappa T+\left(  \tau-\tau_{G}\right)  B\\
&  =-\varkappa T+\varkappa HB
\end{align*}
Then assuming that $\varkappa \rangle0$ we obtain%
\begin{equation}
\frac{ds^{\ast}}{ds}=\varkappa \sqrt{1+H^{2}}\label{4-6}%
\end{equation}
and%
\[
\frac{d\gamma}{ds^{\ast}}=\frac{1}{\sqrt{1+H^{2}}}\left(  -T+HB\right)  .
\]
If we differentiate the last equation, then we obtain%
\begin{align*}
\frac{d^{2}\gamma}{ds^{\ast^{2}}}\frac{ds^{\ast}}{ds} &  =-\dfrac
{HH^{\shortmid}}{\left(  1+H^{2}\right)  ^{3/2}}\left(  -T+HB\right)
+\frac{1}{\sqrt{1+H^{2}}}\left(  -\overset{\cdot}{T}+H^{\shortmid}%
B+H\overset{\cdot}{B}\right) \\
&  =-\dfrac{HH^{\shortmid}}{\left(  1+H^{2}\right)  ^{3/2}}\left(
-T+HB\right)  +\frac{1}{\sqrt{1+H^{2}}}\left \{  -\varkappa N+H^{\shortmid
}B+H\left(  -\tau N-\dfrac{1}{2}\left[  T,B\right]  \right)  \right \}
\end{align*}
and by using the Eq. \eqref{4-6} with together Proposition \ref{3.4} we obtain%
\begin{align*}
\frac{d^{2}\gamma}{ds^{\ast^{2}}} &  =-\frac{H}{\sqrt{1+H^{2}}}\dfrac
{H^{\shortmid}}{\varkappa \left(  1+H^{2}\right)  ^{3/2}}\left(  -T+HB\right)
+\dfrac{1}{\varkappa \left(  1+H^{2}\right)  }\left \{  \left(  -\varkappa
-H\left(  \tau-\tau_{G}\right)  \right)  N+H^{\shortmid}B\right \} \\
&  =-\frac{H}{\sqrt{1+H^{2}}}\dfrac{H^{\shortmid}}{\varkappa \left(
1+H^{2}\right)  ^{3/2}}\left(  -T+HB\right)  +\dfrac{1}{\varkappa \left(
1+H^{2}\right)  }\left \{  -\varkappa \left(  1+H^{2}\right)  N+H^{\shortmid
}B\right \}  .
\end{align*}
Since $\alpha$ is a slant helix, $\sigma_{N}(s)$ is a constant function. So,
we can obtain%
\begin{equation}
\frac{d^{2}\gamma}{ds^{\ast^{2}}}=\frac{1}{\sigma_{N}(s)}\frac{H}%
{\sqrt{1+H^{2}}}T-N+\frac{1}{\sigma_{N}(s)}\frac{1}{\sqrt{1+H^{2}}%
}B\label{4-7}%
\end{equation}
Hence%
\[
\varkappa_{\gamma}=\left \Vert \frac{d^{2}\gamma}{ds^{\ast^{2}}}\right \Vert
=\frac{1}{\left \vert \sigma_{N}\right \vert }\sqrt{1+\sigma_{N}^{2}}%
\]
where $\varkappa_{\gamma}$ is the curvature of $\gamma$ . Then differentiating
the Eq. \eqref{4-7} and using the Definition \ref{3.3} we have%
\begin{align*}
\frac{d^{3}\gamma}{ds^{\ast^{3}}}\varkappa \sqrt{1+H^{2}} &  =-\frac{1}%
{\sigma_{N}}\left \{  \dfrac{H^{\shortmid}}{\left(  1+H^{2}\right)  ^{3/2}%
}\left(  -T+HB\right)  +\frac{H}{\sqrt{1+H^{2}}}\left(  -\overset{\cdot}%
{T}+H^{\shortmid}B+H\overset{\cdot}{B}\right)  \right \}  -\overset{\cdot}{N}\\
&  +\frac{1}{\sigma_{N}}\left(  \frac{HH^{\shortmid}}{\sqrt{1+H^{2}}}%
B+\sqrt{1+H^{2}}\overset{\cdot}{B}\right) \\
&  =-\frac{1}{\sigma_{N}}\left \{  \dfrac{H^{\shortmid}}{\left(  1+H^{2}%
\right)  ^{3/2}}\left(  -T+HB\right)  +\frac{H}{\sqrt{1+H^{2}}}\left(
-\varkappa \left(  1+H^{2}\right)  N+H^{\shortmid}B\right)  \right \} \\
&  -D_{T}N+\dfrac{1}{2}\left[  T,N\right]  +\frac{1}{\sigma_{N}}\left(
\frac{HH^{\shortmid}}{\sqrt{1+H^{2}}}B+\sqrt{1+H^{2}}\left(  D_{T}B-\dfrac
{1}{2}\left[  T,B\right]  \right)  \right)
\end{align*}
then by using the Proposition \ref{3.4}, we obtain%
\begin{equation}
\frac{d^{3}\gamma}{ds^{\ast^{3}}}=\varkappa \frac{\sigma_{N}^{2}+1}{\sigma
_{N}^{2}}T-\varkappa H\frac{\sigma_{N}^{2}+1}{\sigma_{N}^{2}}B\label{4-8}%
\end{equation}
Thus we compute%
\[
\tau_{\gamma}=\frac{\det \left(  \gamma^{\shortmid},\gamma^{\shortparallel
},\gamma^{\shortmid \shortmid \shortmid}\right)  }{\left \Vert \gamma^{\shortmid
}\times \gamma^{\shortparallel}\right \Vert ^{2}}=0
\]
where $\tau_{\gamma}$ is the torsion of $\gamma.$ Hence $\gamma$ is a plane
curve. This complete the proof.
\end{proof}

\subsection{Binormal indicatrices of slant helices:}

\begin{definition}
\label{4.6}Let $\alpha:I\subset \mathbb{R\rightarrow}G$ \ be an arc-lenghted
regular curve. Its binormal indicatrix is the parametrized curve
$\delta:I\subset \mathbb{R\rightarrow}S^{2}\subset \mathfrak{g}$ defined by as%
\[
\delta \left(  s^{\ast}\right)  =B(s)=\sum_{i=1}^{3}\text{ }b_{i}X_{i}\text{
for all }s\in I
\]
where $\left \{  X_{1},X_{2},X_{3}\right \}  $ is an orthonormal basis of
$\mathfrak{g}$ and $s^{\ast}$ is the arc length parameter of $\delta.$
\end{definition}


\begin{theorem}
\label{4.7}Let $\alpha$\ be an arc-lenghted regular curve and $\gamma$ be the
binormal indicatrix of the curve $\alpha.$ Then the curve $\alpha$ is a slant
helix in three dimensional Lie group $G$ if and only if \ the curve $\delta$
is a general helix on $S^{2}$.
\end{theorem}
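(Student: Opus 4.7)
The plan is to follow the same template as the proof of Theorem \ref{4.2}, only starting from the binormal field $\delta(s^{\ast})=B(s)$ instead of the tangent field. First I would differentiate $\delta$ using Eq.~\eqref{2-3} together with the Frenet formula $D_T B=-\tau N$ and the bracket identity $[T,B]=-2\tau_G N$ from Proposition \ref{3.4}. These collapse into $\dot{B}=-(\tau-\tau_G)N=-\varkappa H N$, which immediately yields
\[
\frac{ds^{\ast}}{ds}=\varkappa|H|,\qquad T_{\delta}(s^{\ast})=-N(s),
\]
up to the sign of $H$.

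Next, I would differentiate $T_{\delta}=-N$ a second time, applying the same decomposition $\dot{N}=D_T N-\tfrac12[T,N]$ with $D_T N=-\varkappa T+\tau B$ and $[T,N]=2\tau_G B$. This gives $\dot{N}=-\varkappa T+\varkappa HB$, and solving the resulting Frenet-like equation $\varkappa_{\delta}N_{\delta}(ds^{\ast}/ds)=-\dot{N}$ should produce
\[
\varkappa_{\delta}=\frac{\sqrt{1+H^{2}}}{|H|},\qquad N_{\delta}=\frac{T-HB}{\sqrt{1+H^{2}}},
\]
after which $B_{\delta}=T_{\delta}\times N_{\delta}$ is a short cross-product computation that expresses $B_{\delta}$ as a linear combination of $T$ and $B$, analogous to Eq.~\eqref{4-4}.

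The most delicate step will be differentiating $B_{\delta}$ one more time. I expect all the pure $\varkappa$- and $\tau$-terms to cancel in exactly the way they do in the tangent-indicatrix calculation, leaving a single multiple of $N_{\delta}$ proportional to $H'/(1+H^{2})^{3/2}$. Dividing by $\varkappa_{\delta}\cdot(ds^{\ast}/ds)$ I anticipate obtaining
\[
\frac{\tau_{\delta}-\tau_{G_{\delta}}}{\varkappa_{\delta}}=\frac{H'}{\varkappa(1+H^{2})^{3/2}}=\frac{1}{\sigma_{N}}.
\]
Once this identity is in hand, Theorem \ref{2.3} says that $\delta$ is a general helix on $S^{2}$ iff this ratio is constant, and by Theorem \ref{3.6} (together with Definition \ref{3.3}) this is equivalent to $\alpha$ being a slant helix. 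The converse direction needs no separate argument because every step above is reversible.

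The main obstacle is bookkeeping rather than insight: I must handle the sign of $H$ carefully so that $ds^{\ast}/ds>0$, exclude the degenerate situation $H\equiv 0$ (where the binormal indicatrix fails to be regular), and at each differentiation consistently apply Proposition \ref{3.4} so that each bracket correction converts $\tau$ into $\tau-\tau_G=\varkappa H$. Apart from this, the calculation is essentially parallel to Theorem \ref{4.2}, with the roles of $T$ and $B$ interchanged.
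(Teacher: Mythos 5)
Your proposal follows essentially the same route as the paper's own proof: differentiate $\delta=B$ to get $T_{\delta}=\mp N$ and $ds^{\ast}/ds=\pm\varkappa H$, compute $\varkappa_{\delta}=\sqrt{1+H^{2}}/|H|$, $N_{\delta}$, $B_{\delta}=T_{\delta}\times N_{\delta}$, and a final differentiation using Proposition \ref{3.4} to obtain $(\tau_{\delta}-\tau_{G_{\delta}})/\varkappa_{\delta}=H'/\bigl(\varkappa(1+H^{2})^{3/2}\bigr)=1/\sigma_{N}$, then invoke Theorem \ref{2.3} and Theorem \ref{3.6}. The sign issue you flag is exactly what the paper handles with its $\varepsilon=\pm1$ depending on the sign of $\varkappa H$, so the argument is correct and matches the paper's.
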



\begin{proof}
We assume that $\alpha$ be a slant helix in a three dimensional Lie group and
$\alpha_{B}$ be the tangent indicatrix of the curve $\alpha.$ From the
Definition \ref{4.6} we get%
\begin{equation}
\delta \left(  s^{\ast}\right)  =B(s)\label{4-9}%
\end{equation}
then differentiating the Eq.\eqref{4-9} and using the Eq.\eqref{2-3}, we have%
\begin{align*}
\frac{d\delta}{ds^{\ast}}\frac{ds^{\ast}}{ds} &  =\overset{\cdot}{B}%
=D_{T}B-\dfrac{1}{2}\left[  T,B\right] \\
\frac{d\delta}{ds^{\ast}}\frac{ds^{\ast}}{ds} &  =-\varkappa HN.
\end{align*}
Then assuming that $\varepsilon=\left \{
\begin{array}
[c]{cc}%
1 & \text{ },\text{ if }\varkappa H\rangle0\\
-1 & \text{ },\text{ if }\varkappa H\langle0
\end{array}
\right \}  $ we have%
\[
\frac{ds^{\ast}}{ds}=\varepsilon \varkappa H
\]
and%
\begin{equation}
T_{\delta}\left(  s^{\ast}\right)  =-\varepsilon N(s).\label{4-10}%
\end{equation}
If we differentiate the last equation then we obtain%
\begin{align*}
\varkappa_{\delta}N_{\delta}\left(  s^{\ast}\right)  \frac{ds^{\ast}}{ds} &
=-\varepsilon \overset{\cdot}{N}=-\varepsilon D_{T}N+\varepsilon \dfrac{1}%
{2}\left[  T,N\right] \\
\varkappa_{\delta}N_{\delta}\left(  s^{\ast}\right)  \frac{ds^{\ast}}{ds} &
=\varepsilon \varkappa T-\varepsilon \tau B+\varepsilon \dfrac{1}{2}\left \langle
\left[  T,N\right]  ,B\right \rangle B\\
\varkappa_{\delta}N_{\delta}\left(  s^{\ast}\right)  \varepsilon \varkappa H &
=\varepsilon \varkappa T-\varepsilon(\tau-\tau_{G})B\\
\varkappa_{\delta}N_{\delta}\left(  s^{\ast}\right)   &  =\frac{1}{H}T-B
\end{align*}
where $\varkappa_{\delta}$ is the curvature of $\delta.$ Hence%
\[
\varkappa_{\delta}=\frac{1}{\left \vert H\right \vert }\sqrt{1+H^{2}}%
\]
and assuming that $\varkappa \rangle0$ we have
\begin{equation}
N_{\delta}\left(  s^{\ast}\right)  =\tfrac{\varepsilon}{\sqrt{1+H^{2}}%
}T-\tfrac{\varepsilon H}{\sqrt{1+H^{2}}}B\label{4-11}%
\end{equation}
Then using the Eq.\eqref{4-10} and the Eq.\eqref{4-11} we have%
\begin{align}
B_{\delta}\left(  s^{\ast}\right)   & =T_{\delta}\left(  s^{\ast}\right)
\times N_{\delta}\left(  s^{\ast}\right) \nonumber \\
& =-\tfrac{H}{\sqrt{1+H^{2}}}T+\tfrac{1}{\sqrt{1+H^{2}}}B.\label{4-12}%
\end{align}

Using the differentiation of the last equation and the Proposition \ref{3.4},
this implies%
\[
\left(  \tau_{\delta}-\tau_{G_{\delta}}\right)  N_{\delta}\left(  s^{\ast
}\right)  \frac{ds^{\ast}}{ds}=\tfrac{H^{\prime}}{\left(  1+H^{2}\right)
^{3/2}}T+\tfrac{HH^{\prime}}{\left(  1+H^{2}\right)  ^{3/2}}B
\]
or using the equality $\frac{ds^{\ast}}{ds}=\varepsilon \varkappa H$, we have%
\[
\left(  \tau_{\delta}-\tau_{G_{\delta}}\right)  N_{\delta}\left(  s^{\ast
}\right)  =\tfrac{H^{\prime}}{\varkappa \left(  1+H^{2}\right)  ^{3/2}}%
T+\tfrac{HH^{\prime}}{\varkappa \left(  1+H^{2}\right)  ^{3/2}}B
\]
where $\tau_{G_{\delta}}=\frac{1}{2}\left \langle \left[  T_{\delta},N_{\delta
}\right]  ,B_{\delta}\right \rangle .$ Thus we have%
\[
\tau_{\delta}=\frac{H^{\shortmid}}{\varkappa H\left(  1+H^{2}\right)  }%
+\tau_{G_{\delta}}%
\]
where $\tau_{\delta}$ is the torsion of $\delta$ and so $\dfrac{\tau_{\delta
}-\tau_{G_{\delta}}}{\varkappa_{\delta}}=\dfrac{H^{\shortmid}}{\varkappa
\left(  1+H^{2}\right)  ^{3/2}}$\ is a constant function, that is $\delta$ is
a general helix.

Conversely, we assume that $\delta$ is a general helix then we can see easily
that $\alpha$ is a slant helix. These complete the proof.
\end{proof}


\begin{corollary}
Let $\alpha$\ be an arc-lenghted regular curve with the Frenet vector fields
$\left \{  T,N,B\right \}  $ in the Lie group $G$ and $\delta$ be the binormal
indicatrix of the curve $\alpha.$ Then $\tau_{G_{\delta}}=\tau_{G}$ for the
curves $\alpha$ and $\delta.$
\end{corollary}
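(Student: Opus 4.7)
My plan mirrors the remark used for Corollary \ref{4.3}, adapted to the binormal frame. From the computations already performed inside the proof of Theorem \ref{4.7}, namely equations \eqref{4-10}, \eqref{4-11} and the cross product identity $B_{\delta}=T_{\delta}\times N_{\delta}$, I have $T_{\delta},N_{\delta},B_{\delta}$ written explicitly as linear combinations of $T,N,B$ with coefficients depending only on $H$ and the sign $\varepsilon$. Since $\tau_{G_{\delta}}=\tfrac{1}{2}\langle[T_{\delta},N_{\delta}],B_{\delta}\rangle$, bilinearity of the Lie bracket reduces the claim to evaluating the three basic brackets $[T,N]$, $[T,B]$ and $[N,B]$ and then pairing the result with $B_{\delta}$.

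Proposition \ref{3.4} already supplies $[T,N]=2\tau_{G}B$ and $[T,B]=-2\tau_{G}N$. The remaining ingredient, $[N,B]$, is not recorded in the paper, but it follows directly from the bi-invariance identity \eqref{2-1}. Writing $[N,B]=aT+bN+cB$ and pairing successively with $T$, $N$, $B$, I obtain $a=\langle T,[N,B]\rangle=\langle[T,N],B\rangle=2\tau_{G}$, while $b$ and $c$ must vanish: by \eqref{2-1} combined with the antisymmetry of the bracket, $\langle N,[N,B]\rangle=\langle[N,N],B\rangle=0$ and similarly $\langle B,[N,B]\rangle=0$. Hence $[N,B]=2\tau_{G}T$.

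Substituting these three brackets into the expansion of $[T_{\delta},N_{\delta}]$ is now a mechanical step: the outcome is a multiple of $HT+B$ (the combination that also makes up $B_{\delta}$ up to the factor $\sqrt{1+H^{2}}$), so when I take the inner product with $B_{\delta}$ the $(1+H^{2})$ coming from $\|HT+B\|^{2}$ cancels the $1/(1+H^{2})$ from the two normalizations, and all $H$-dependence disappears. The net effect is $\langle[T_{\delta},N_{\delta}],B_{\delta}\rangle=2\tau_{G}$, i.e.\ $\tau_{G_{\delta}}=\tau_{G}$. I anticipate no genuine obstacle: the only step that is not entirely routine is the derivation of $[N,B]$, and this is a one-line consequence of the bi-invariance \eqref{2-1} together with Proposition \ref{3.4}.
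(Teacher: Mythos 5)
Your argument is correct and is essentially the paper's own (the paper simply declares the corollary obvious from \eqref{4-10}, \eqref{4-11} and \eqref{4-12}); you just carry out the bracket expansion explicitly, including the unstated identity $[N,B]=2\tau_{G}T$, which does follow from \eqref{2-1} exactly as you indicate. One remark: your computation implicitly uses $B_{\delta}=\tfrac{1}{\sqrt{1+H^{2}}}\left(HT+B\right)$, which is what $T_{\delta}\times N_{\delta}$ actually equals from \eqref{4-10} and \eqref{4-11}; the sign of the $T$-component printed in \eqref{4-12} is a typo, and only with your (correct) sign does the pairing yield $2\tau_{G}$ rather than $2\tau_{G}(1-H^{2})/(1+H^{2})$.
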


\begin{proof}
It is obvious using the equations \eqref{4-10}, \eqref{4-11} and \eqref{4-12}.
\end{proof}

\subsection{Involutes of slant helices:}

\begin{definition}
\label{4.8}Let $\alpha:I\subset \mathbb{R\rightarrow}G$ \ be an arc-lenghted
regular curve. Then the curve $x:I^{\ast}\subset \mathbb{R\rightarrow}G$ is
called the involute of the curve $\alpha$ if the tangent vector field of the
curve $\alpha$ is perpendicular to the tangent vector field of the curve $x.$
That is,%
\[
\left \langle T(s),T_{x}(s^{\ast})\right \rangle =0
\]
where $T$ and $T_{x}$ are the tangent vector fields of the curves $\alpha$ and
$x,$ respectively. Moreover $\left(  x,\alpha \right)  $ is called the
involute-evolute curve couple which are given by $\left(  I,\alpha \right)  $
and $\left(  I^{\ast},x\right)  $ coordinate neighbourhoods, respectively.
Then the distance between the curves $x$ and $\alpha$ are given by%
\[
d_{L}\left(  \alpha \left(  s\right)  ,x\left(  s\right)  \right)  =\left \vert
c-s\right \vert \text{, }c=\text{constant }\forall s\in I,
\]
\cite{struik}. We should remark that the parameter $s$ generally is not an
arc-length parameter of $x.$ So, we define the arc-length parameter of the
curve $x$ by
\[
s^{\ast}=\psi \left(  s\right)  =\int \limits_{0}^{s}\left \Vert \frac{dx\left(
s\right)  }{ds}\right \Vert ds
\]
where $\psi:I\longrightarrow I^{\ast}$ is a smooth function and holds the
following equality%
\begin{equation}
\psi^{\prime}\left(  s\right)  =\left(  c-s\right)  \varkappa \label{4-13}%
\end{equation}
for $s\in I.$
\end{definition}


\begin{theorem}
\label{4.9}Let $\alpha:I\subset \mathbb{R\rightarrow}G$ \ be an arc-lenghted
regular curve and $x$ be an involute of $\alpha$. Then $\alpha$ is a slant
helix in a three dimensional Lie group if and only if $x$ is a general helix.
\end{theorem}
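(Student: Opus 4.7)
The plan is to compute the Frenet apparatus $\{T_x,N_x,B_x,\varkappa_x,\tau_x\}$ of the involute $x$ in terms of $\{T,N,B,\varkappa,\tau\}$ of $\alpha$, then compute the associated geometric torsion $\tau_{G_x}$ and harmonic curvature $H_x$, and finally apply Theorem \ref{2.3} to reduce ``$x$ is a general helix'' to the constancy of $H_x$, which by Theorem \ref{3.6} is the same as ``$\alpha$ is a slant helix''.

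First, I would exploit the defining perpendicularity $\left\langle T,T_x\right\rangle =0$ together with equation \eqref{4-13}, $\psi^{\prime}(s)=(c-s)\varkappa$, to identify $T_x(s^{\ast})=N(s)$, exactly as in the Euclidean case. Differentiating $T_x=N$ with respect to $s$ and using the relation \eqref{2-3} between the ordinary and covariant derivatives, the Frenet formula $D_TN=-\varkappa T+\tau B$, and Proposition \ref{3.4} to handle $[T,N]=2\tau_G B$, I obtain
\[
\varkappa_x N_x\,\psi^{\prime}(s)=-\varkappa T+(\tau-\tau_G)B=-\varkappa T+\varkappa HB,
\]
so that $\varkappa_x=\dfrac{\sqrt{1+H^2}}{|c-s|}$, $N_x=\dfrac{1}{\sqrt{1+H^2}}(-T+HB)$, and $B_x=T_x\times N_x=\dfrac{1}{\sqrt{1+H^2}}(HT+B)$. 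The structure here is entirely parallel to the computation of $T_\beta,N_\beta,B_\beta$ in Theorem \ref{4.2}.

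Second, to compute $\tau_{G_x}=\tfrac{1}{2}\left\langle \left[ T_x,N_x\right] ,B_x\right\rangle $, I need the bracket $[N,B]$ in addition to $[T,N]$ and $[T,B]$; a short computation using \eqref{2-1} and Proposition \ref{3.4} gives $[N,B]=2\tau_G T$. Substitution then yields $\tau_{G_x}=\tau_G$, mirroring Corollary \ref{4.3}. Next, differentiating $B_x$ along $\alpha$ and invoking the Frenet formula $D_{T_x}B_x=-\tau_x N_x$ together with \eqref{2-3} applied in the frame of $x$, the expansion collapses after cancellation to
\[
\tau_x-\tau_{G_x}=\frac{H^{\prime}}{(c-s)\varkappa(1+H^2)},
\]
and therefore the harmonic curvature of $x$ is
\[
H_x=\frac{\tau_x-\tau_{G_x}}{\varkappa_x}=\frac{H^{\prime}}{\varkappa(1+H^2)^{3/2}}=\frac{1}{\sigma_N}.
\]
By Theorem \ref{2.3}, $x$ is a general helix if and only if $H_x$ is a constant, and by Theorem \ref{3.6} this is equivalent to $\alpha$ being a slant helix.

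The main obstacle will be the careful bookkeeping of the brackets $[N,B]$, $[T_x,N_x]$, and $[T_x,B_x]$, so that the ``group contribution'' to the torsion collapses cleanly to $\tau_{G_x}=\tau_G$ and no spurious $\tau_G$-terms survive in $\tau_x-\tau_{G_x}$; once these identities are secured, the remainder is routine chain-rule computation paralleling the proofs of Theorems \ref{4.2} and \ref{4.7}.
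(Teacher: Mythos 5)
Your proposal is correct and follows essentially the same route as the paper: compute the Frenet apparatus of the involute from $T_x=N$, identify $\frac{\tau_x-\tau_{G_x}}{\varkappa_x}=\frac{H^{\prime}}{\varkappa\left(1+H^{2}\right)^{3/2}}$, and conclude via Theorem \ref{2.3} and Theorem \ref{3.6}. In fact your bookkeeping is slightly more careful than the paper's (you retain the factor $(c-s)$ in $\varkappa_x$ and $\tau_x-\tau_{G_x}$, where the paper silently drops it; the factors cancel in the ratio, so both arrive at the same criterion), and your explicit verification of $\tau_{G_x}=\tau_G$ via $\left[N,B\right]=2\tau_G T$ is what the paper defers to the subsequent corollary.
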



\begin{proof}
Let $x$ be the involute of $\alpha$, then we have%
\[
x(s)=\alpha(s)+\left(  c-s\right)  T\left(  s\right)  ,\text{ }%
c=\text{constant.}%
\]
Let us derive both side with respect to $s$%
\begin{align*}
\frac{d\beta}{ds^{\ast}}\frac{ds^{\ast}}{ds} &  =\left(  c-s\right)
\overset{\cdot}{T}(s),\\
T_{x}\left(  s^{\ast}\right)  \frac{ds^{\ast}}{ds} &  =\left(  c-s\right)
\varkappa N,
\end{align*}
where $s$ and $s^{\ast}$ are arc-parameters of $\alpha$ and $x$, respectively.
Then we calculate as%
\[
\frac{ds^{\ast}}{ds}=\psi^{\prime}\left(  s\right)  =\left(  c-s\right)
\varkappa.
\]
and using this fact we can write%
\begin{equation}
T_{x}\left(  s^{\ast}\right)  =N.\label{4-14}%
\end{equation}
If we differentiate the last equation and use Frenet formulas then we obtain%
\begin{align*}
\varkappa_{x}N_{x}\left(  s^{\ast}\right)  \frac{ds^{\ast}}{ds} &
=\overset{\cdot}{N}=D_{T}N-\dfrac{1}{2}\left[  T,N\right] \\
\varkappa_{x}N_{x}\left(  s^{\ast}\right)  \varkappa &  =-\kappa T+\tau
B-\dfrac{1}{2}\left \langle \left[  T,N\right]  ,B\right \rangle B
\end{align*}
or with the help of the Proposition \ref{3.4}, we get%
\[
\varkappa_{x}N_{x}\left(  s^{\ast}\right)  =-T+HB
\]

where $\varkappa_{x}$ is the curvature of $x.$ Hence%
\[
\varkappa_{x}=\sqrt{1+H^{2}}%
\]
and%
\begin{equation}
N_{x}\left(  s^{\ast}\right)  =-\tfrac{1}{\sqrt{1+H^{2}}}T+\tfrac{H}%
{\sqrt{1+H^{2}}}B\label{4-15}%
\end{equation}
Then using the Eq.\eqref{4-14} and the Eq.\eqref{4-15} we have%
\begin{align}
B_{x}\left(  s^{\ast}\right)   & =T_{x}\left(  s^{\ast}\right)  \times
N_{x}\left(  s^{\ast}\right) \nonumber \\
& =\tfrac{H}{\sqrt{1+H^{2}}}T+\tfrac{1}{\sqrt{1+H^{2}}}B.\label{4-16}%
\end{align}
Using the differentiation of the last equation and the Proposition \ref{3.4},
this implies%
\[
\left(  \tau_{x}-\tau_{G_{x}}\right)  N_{x}\left(  s^{\ast}\right)
\frac{ds^{\ast}}{ds}=-\tfrac{H^{\prime}}{\left(  1+H^{2}\right)  ^{3/2}%
}T+\tfrac{HH^{\prime}}{\left(  1+H^{2}\right)  ^{3/2}}B
\]
or using the Eq.\eqref{4-13}, we have%
\[
\left(  \tau_{x}-\tau_{G_{x}}\right)  N_{\beta}\left(  s^{\ast}\right)
=-\tfrac{H^{\prime}}{\varkappa \left(  1+H^{2}\right)  ^{3/2}}T+\tfrac
{HH^{\prime}}{\varkappa \left(  1+H^{2}\right)  ^{3/2}}B
\]
where $\tau_{G_{x}}=\frac{1}{2}\left \langle \left[  T_{x},N_{x}\right]
,B_{x}\right \rangle .$ Thus we compute%
\[
\tau_{x}=\frac{H^{\shortmid}}{\varkappa \left(  1+H^{2}\right)  }+\tau_{G_{x}}%
\]
where $\tau_{x}$ is the torsion of $x.$ The we can easily see that
$\tfrac{\tau_{x}-\tau_{G_{x}}}{\varkappa_{x}}=\frac{H^{\shortmid}}%
{\varkappa \left(  1+H^{2}\right)  ^{3/2}}$\ is a constant function. In other
words, using the Theorem \ref{2.3} $x$ is a general helix.

Conversely, we assume that $x$ is a general helix then we can easily see that
$\alpha$ is a slant helix. These complete the proof.
\end{proof}


\begin{corollary}
\label{4.12}Let $\alpha:I\subset \mathbb{R\rightarrow}G$ \ be an arc-lenghted
regular curve and $\beta:I\subset \mathbb{R\rightarrow}S^{2}\subset
\mathfrak{g}$ be the tangent indicatrix of the curve $\alpha.$ If $\alpha$ is
a slant helix, then $\beta$ is one of the involutes of the curve $\alpha.$
\end{corollary}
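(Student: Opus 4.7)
The plan is to verify that the tangent indicatrix $\beta$ meets the defining condition of an involute in Definition \ref{4.8}, and then observe that its Frenet apparatus coincides with that of an involute of $\alpha$ built from the formula $x(s)=\alpha(s)+(c-s)T(s)$. The central observation is that Eq. \eqref{4-2} from the proof of Theorem \ref{4.2} reads $T_{\beta}(s^{\ast})=N(s)$, and Eq. \eqref{4-14} from the proof of Theorem \ref{4.9} reads $T_{x}(s^{\ast})=N(s)$. These are the same vector.

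First I would recall that in any Frenet frame one has $\langle T,N\rangle =0$, so
\[
\langle T_{\alpha}(s),T_{\beta}(s^{\ast})\rangle =\langle T(s),N(s)\rangle =0,
\]
which is exactly the orthogonality condition characterizing an involute of $\alpha$ in Definition \ref{4.8}. Next I would line up the remaining Frenet vectors: from Eq. \eqref{4-3} and Eq. \eqref{4-4} the normal and binormal of $\beta$ are
\[
N_{\beta}=-\tfrac{1}{\sqrt{1+H^{2}}}T+\tfrac{H}{\sqrt{1+H^{2}}}B,\qquad B_{\beta}=\tfrac{H}{\sqrt{1+H^{2}}}T+\tfrac{1}{\sqrt{1+H^{2}}}B,
\]
while Eq. \eqref{4-15} and Eq. \eqref{4-16} give the same expressions for $N_{x}$ and $B_{x}$. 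Thus the Frenet frames of $\beta$ and of the involute $x$ agree pointwise, and in particular $\beta$ plays the role of an involute of $\alpha$, with the constant $c$ in $x(s)=\alpha(s)+(c-s)T(s)$ corresponding to the choice of initial point on the indicatrix.

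Finally, the slant helix hypothesis is used implicitly to guarantee that both $\beta$ and the involute $x$ are well-behaved as general helices with matching curvature and torsion data, as established by Theorem \ref{4.2} and Theorem \ref{4.9}: in that case both curves satisfy $\varkappa_{\beta}=\varkappa_{x}=\sqrt{1+H^{2}}$ and share the common quantity $\tfrac{\tau_{\beta}-\tau_{G_{\beta}}}{\varkappa_{\beta}}=\tfrac{H^{\shortmid}}{\varkappa(1+H^{2})^{3/2}}=\tfrac{\tau_{x}-\tau_{G_{x}}}{\varkappa_{x}}$, which is constant. The one delicate point, more bookkeeping than substance, is that $\beta$ lies in $S^{2}\subset \mathfrak{g}$ while an involute $x$ lies in $G$; but Definition \ref{4.8} characterizes involutes through the orthogonality of tangent vectors, and this condition has already been verified, so the identification is legitimate at the level of the Frenet structure and the corollary follows.
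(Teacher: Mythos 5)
Your proposal is correct and follows essentially the same route as the paper, which simply declares the corollary ``obvious from Theorem \ref{4.2} and Theorem \ref{4.9}'': the substance in both cases is that Eq.~\eqref{4-2} and Eq.~\eqref{4-14} give $T_{\beta}(s^{\ast})=N(s)=T_{x}(s^{\ast})$, so $\langle T,T_{\beta}\rangle=0$ and the involute condition of Definition \ref{4.8} is met, with the frames \eqref{4-3}--\eqref{4-4} and \eqref{4-15}--\eqref{4-16} coinciding. You merely make explicit what the paper leaves implicit, including the harmless bookkeeping issue that $\beta$ is parametrized on $S^{2}\subset\mathfrak{g}$ while Definition \ref{4.8} speaks of curves in $G$.
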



\begin{proof}
It is obvious from the Theorem \ref{4.2} and the Theorem \ref{4.9}$.$
\end{proof}


\begin{corollary}
\label{4.13}Let $\alpha:I\subset \mathbb{R\rightarrow}G$ \ be an arc-lenghted
regular curve and $\delta:I\subset \mathbb{R\rightarrow}S^{2}\subset
\mathfrak{g}$ be the binormal indicatrix of the curve $\alpha.$ If $\alpha$ is
a slant helix, then $\delta$ is one of the involutes of the curve $\alpha.$
\end{corollary}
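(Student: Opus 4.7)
The plan is to show that $\delta$ satisfies the defining perpendicularity property of an involute given in Definition \ref{4.8}, so that it must be one of the involutes of $\alpha$. I would then appeal to Theorem \ref{4.9} (which tells us involutes of slant helices are general helices) together with Theorem \ref{4.7} (which tells us binormal indicatrices of slant helices are general helices) to confirm that this identification is consistent.

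First, I would recall from the proof of Theorem \ref{4.7} the computation of the unit tangent of the binormal indicatrix: parametrizing $\delta(s^{\ast})=B(s)$ by arc length, differentiating via \eqref{2-3} and using Proposition \ref{3.4} gives $\overset{\cdot}{B}=-\varkappa H N$, so that
\[
T_{\delta}\left(s^{\ast}\right)=-\varepsilon N(s),
\]
where $\varepsilon=\pm 1$ is the sign chosen in the proof of Theorem \ref{4.7}. This equation is the key ingredient, and the rest of the argument is essentially immediate from orthonormality of the Frenet frame.

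Next, I would pair this tangent vector of $\delta$ against the tangent vector $T$ of $\alpha$. Since $\{T,N,B\}$ is an orthonormal basis along $\alpha$, we have $\langle T,N\rangle=0$, and consequently
\[
\left \langle T(s),T_{\delta}(s^{\ast})\right \rangle
= \left \langle T,-\varepsilon N\right \rangle = 0.
\]
This is exactly the defining condition from Definition \ref{4.8} for $\delta$ to be an involute of $\alpha$, so $\delta$ is one of the involutes of $\alpha$, as claimed.

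There is no real obstacle to this argument; the only subtle point is recognizing that the proof content already resides in Theorem \ref{4.7}, so the corollary reduces to invoking the expression for $T_{\delta}$ and the orthogonality $T\perp N$. As a sanity check, one may note that Theorem \ref{4.9} predicts that every involute of the slant helix $\alpha$ is a general helix, which is consistent with the conclusion of Theorem \ref{4.7} that $\delta$ is itself a general helix on $S^{2}$.
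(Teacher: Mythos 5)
Your proposal is correct, but it argues differently from the paper. The paper's own proof of Corollary \ref{4.13} is a one-line appeal to Theorem \ref{4.7} together with Theorem \ref{4.9}: the binormal indicatrix of a slant helix is a general helix, and the involutes of a slant helix are exactly the curves whose Frenet data match that computation, so the identification is read off by comparing the two proofs (in particular both $\delta$ and the involute $x$ have tangent field $\pm N$). You instead verify the defining condition of Definition \ref{4.8} directly: from the proof of Theorem \ref{4.7}, $\overset{\cdot}{B}=-\varkappa H N$ gives $T_{\delta}(s^{\ast})=-\varepsilon N(s)$, and orthonormality of the Frenet frame yields $\left\langle T(s),T_{\delta}(s^{\ast})\right\rangle =0$, which is precisely the paper's definition of an involute. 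This is arguably cleaner and more rigorous, since ``both curves are general helices'' does not by itself force $\delta$ to be an involute; your route makes the logical content explicit, whereas the paper's route emphasizes consistency of the three statements. The only implicit hypothesis you inherit from Theorem \ref{4.7} is $\varkappa H\neq 0$ (so that $\delta$ is regular and $T_{\delta}$ is defined), and it is worth noting that with the paper's definition of involute the slant-helix assumption is not actually used beyond this regularity; your closing appeal to Theorems \ref{4.7} and \ref{4.9} is a sanity check rather than a needed step, which is fine.
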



\begin{proof}
It is obvious from the Theorem \ref{4.7} and the Theorem \ref{4.9}$.$
\end{proof}

\begin{corollary}
Let $\alpha$\ be an arc-lenghted regular curve with the Frenet vector fields
$\left \{  T,N,B\right \}  $ in the Lie group $G$ and $x$ be the involute of the
curve $\alpha.$ Then $\tau_{G_{x}}=\tau_{G}$ for the curves $\alpha$ and $x.$
\end{corollary}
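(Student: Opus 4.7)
The plan is to mirror the pattern already used in Corollary \ref{4.3} and the corollary following Theorem \ref{4.7}: starting from the definition $\tau_{G_x}=\frac{1}{2}\langle[T_x,N_x],B_x\rangle$, substitute the Frenet vectors of $x$ computed in the proof of Theorem \ref{4.9} and reduce everything to $\tau_G$ using the bracket identities in Proposition \ref{3.4}.

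Concretely, I would first recall from equations \eqref{4-14}, \eqref{4-15}, \eqref{4-16} that
\[
T_x=N,\quad N_x=-\tfrac{1}{\sqrt{1+H^2}}T+\tfrac{H}{\sqrt{1+H^2}}B,\quad B_x=\tfrac{H}{\sqrt{1+H^2}}T+\tfrac{1}{\sqrt{1+H^2}}B.
\]
Then I would expand $[T_x,N_x]$ by bilinearity, which reduces to a combination of $[N,T]$ and $[N,B]$. The brackets $[T,N]=2\tau_G B$ and $[T,B]=-2\tau_G N$ are given directly by Proposition \ref{3.4}; the remaining bracket $[N,B]$ is the one that the previous propositions did not spell out, so this is the single step that needs a little care.

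The way to handle it is via the bi-invariance identity \eqref{2-1}: $\langle[N,B],T\rangle=\langle N,[B,T]\rangle=2\tau_G\langle N,N\rangle=2\tau_G$, while $\langle[N,B],N\rangle=\langle[N,B],B\rangle=0$ by antisymmetry of the bracket paired with the metric, so $[N,B]=2\tau_G T$. Plugging these three bracket identities back in gives
\[
[T_x,N_x]=\tfrac{2\tau_G H}{\sqrt{1+H^2}}T+\tfrac{2\tau_G}{\sqrt{1+H^2}}B,
\]
and pairing with $B_x$ yields $\langle[T_x,N_x],B_x\rangle=\frac{2\tau_G H^2}{1+H^2}+\frac{2\tau_G}{1+H^2}=2\tau_G$, whence $\tau_{G_x}=\tau_G$.

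The only non-routine step, and hence the main (mild) obstacle, is justifying $[N,B]=2\tau_G T$; once that is in place the rest is a mechanical collapse of the coefficients. I would therefore present the proof as a short three-line calculation, with a one-sentence remark deriving $[N,B]=2\tau_G T$ from bi-invariance and Proposition \ref{3.4}, exactly in the terse "obvious" style of the two earlier corollaries.
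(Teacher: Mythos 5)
Your proposal is correct and follows exactly the route the paper intends: its proof simply declares the result ``obvious'' from equations \eqref{4-14}, \eqref{4-15} and \eqref{4-16}, and your computation (expanding $[T_x,N_x]$ by function-bilinearity, using Proposition \ref{3.4} together with the bi-invariance identity \eqref{2-1} to get $[N,B]=2\tau_G T$, then pairing with $B_x$) is precisely the verification being left to the reader, carried out correctly.
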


\begin{proof}
It is obvious using the equations \eqref{4-14}, \eqref{4-15} and \eqref{4-16}.
\end{proof}


\begin{thebibliography}{99}                                                                                               %
\bibitem {lancret}M. A. Lancret, M\'{e}moire sur les courbes \`{a} double
courbure, M\'{e}moires pr\'{e}sent\'{e}s \`{a} l'Institut1 (1806) 416-454.

\bibitem {struik}D. J. Struik, Lectures on Classical Differential Geometry,
Dover, New-York, 1988.

\bibitem {barros}M. Barros, General Helices and a theorem of Lancert, Proc.
Amer. Math. Soc. 125 (5) (1997) 1503-1509.

\bibitem {garay}J. Arroyo, M. Barros and J. O. Garay, A characterization of
helices and Cornu spirals in real space forms, Bull. Austral. Math. Soc. 56
(1) (1997) 37-49.

\bibitem {coken}A. C. \c{C}\"{o}ken, \"{U}. \c{C}ift\c{c}i, A note on the
geometry of Lie groups, Nonlinear Analysis TMA 68 (2008) 2013-2016.

\bibitem {ciftci}\"{U}. \c{C}ift\c{c}i, A generalization of Lancert's theorem,
J. Geom. Phys. 59 (2009) 1597-1603.

\bibitem {izu}S. Izumiya and N. Tkeuchi, New special curves and developable
surfaces, Turk. J. Math 28 (2004), 153-163.

\bibitem {kula}L. Kula and Y. Yayl\i, On slant helix and its spherical
indicatrix, Appl. Math. Comput. 169 (1) (2005) 600-607.

\bibitem {kula1}L. Kula, N. Ekmekci, Y. Yayl\i \ and K. \.{I}larslan,
Characterizations of slant helices in Euclidean 3-space, Turk. J. Math. 34 (2)
(2010) 261--273.

\bibitem {ali}A. T. Ali and R. L\'{o}pez, Slant helices in Minkowski space
$\mathbb{E}_{1}^{3},$ J. Korean Math. Soc. 48 (1) (2011) 159-167.

\bibitem {ahmad}A. T. Ali, M. Turgut, Position vector of time-like slant helix
in Minkowski 3-space, J. Math. Anal. Appl. 365 (2010) 559-569.

\bibitem {gok}\.{I}. G\"{o}k, O. Zeki Okuyucu, F. Kahraman and H. H. Hac\i
saliho\u{g}lu, On the Quaternionic $B_{2}$-Slant Helices in the Euclidean
Space $E^{4}$, Adv. Appl. Clifford Al. 21 (2011) 707--719.

\bibitem {gok1}\ F. Kahraman, \.{I}. G\"{o}k and H. H. Hac\i saliho\u{g}lu, On
the quaternionic $B_{2}$ slant helices in the semi-Euclidean space $E_{2}^{4}%
$, App. Math. and Comp. 218 (2012) 6391-6400.

\bibitem {crouch}P. Crouch, F. Silva Leite, The dynamic interpolation problem:
on Riemannian manifoldsi Lie groups and symmetric spaces, J. Dyn. Control
Syst. 1 (2) (1995) 177-202.

\bibitem {santo}N. do Esp\'{\i}rito-Santo, S. Fornari, K. Frensel, J. Ripoll,
Constant mean curvature hypersurfaces in a Lie group with a bi-invariant
metric, Manuscripta Math. 111 (4) (2003) 459 470.

\bibitem {noakes}L. Noakes, Null cubics and Lie quadratics, J. Math. Phys. 44
(3) (2003) 1436 1448.

\bibitem {ripol}J. B. Ripoll, On Hypersurfaces of Lie groups, Illinois J.
Math. 35 (1) (1991) 47-55.
\end{thebibliography}
\end{document}